\title{Interpreting the action of the endomorphism monoid of the rationals}
\author{John K. Truss, University of Leeds,}
\date{Edith Vargas-Garc\'\i a, ITAM.}
\begin{document}
\maketitle 
\newtheorem{lemma}{Lemma}[section]
\newtheorem{theorem}[lemma]{Theorem}
\newtheorem{proposition}[lemma]{Proposition}
\newtheorem{corollary}[lemma]{Corollary}
\newtheorem{definition}[lemma]{Definition}
\newtheorem{remark}[lemma]{Remark}
\setcounter{footnote}{1}\footnotetext{Department of Pure Mathematics, University of Leeds, Leeds LS2 9JT, UK, e-mail pmtjkt@leeds.ac.uk, and
Department of Mathematics, ITAM Río Hondo 1,Ciudad de México 01080, México, e-mail:edith.vargas@itam.mx.}
\setcounter{footnote}{2}\footnotetext{The second author gratefully acknowledges support by Asociación Mexicana de Cultura, A.C.}

\newcounter{number}

\begin{abstract}  In this paper, we define the action of $M$, the monoid of embeddings of $({\mathbb Q}, \le)$, on ${\mathbb Q}$, in the monoid $(M, \circ)$. That is we show that ${\mathbb Q}$ itself can be 
interpreted in $(M, \circ)$, and in addition, so can the action of $M$ on $\mathbb Q$. This is extended to the monoid $E$ of all endomorphisms of $({\mathbb Q}, \le)$.   \end{abstract}

2010 Mathematics Subject Classification 08A35

keywords: rationals, first order interpretation, embedding, endomorphism.

\section{Introduction}
Our goal here is to show directly how one can (first order) interpret the action of the monoid $M$ of embeddings of $({\mathbb Q}, \le)$ on $\mathbb Q$ inside $(M, \circ)$. In other words, we represent 
the points of the domain, here the set of rational numbers, as the equivalence classes of certain monoid elements under a definable equivalence relation, and in addition, the action of the monoid on the 
domain presented via this interpretation. The methods used are standard in the theory of ordered permutation groups, see \cite{Glass}, but need to be extended to apply to the present context. Using tricks 
from the paper \cite{Edith}, we obtain the analogous result for the monoid $E$ of endomorphisms of $({\mathbb Q}, \le)$.

\section{Defining some Formulae}

This section contains the technical material which enables us to interpret first of all the domain $\mathbb Q$ in the group $G =$Aut$({\mathbb Q}, \le)$, and then from this also the action of $G$ on this
interpretation. In the next section, we argue about the monoids $M$ and $E$ of self-embeddings of $({\mathbb Q}, \le)$, and of all endomorphisms of $({\mathbb Q}, \le)$, respectively, and show that the 
interpretation of the action of $G$ can be extended to these larger monoids. Since $G$ is definable in both $M$ and $E$, as the family of invertible elements, the interpretation in $G$ can be recast as one 
in $M$ or $E$. In this section all variables range over $G$. 

We look at various formulae which are meant to characterize certain special elements of $G$ (usually `up to conjugacy'). Since our actions are on the left, we write $x^y$ for the conjugate $yxy^{-1}$ rather 
than $y^{-1}xy$. We write $\mbox{conj}(x,y)$ for $\exists z(x^z = y)$, saying that two elements are conjugate. The {\em support} of an element is the set of points that it moves. As remarked above, much of 
this material already exists in \cite{Glass}. Proofs omitted can be found in \cite{Truss}.

If $f\in G$, an orbital $X$ of $f$ is a convex subset `spanned' by an orbit. This means that it is an equivalence class of $\mathbb Q$ under the relation $x \sim y$ if for some integers $m$ and $n$, 
$f^m x \le y \le f^n x$. There are $3$ `kinds' of orbital. $X$ has `parity $+1$' if for some (any) $x \in X, x < f(x)$. It has `parity $-1$' if for some (any) $x \in X, x > f(x)$. It has `parity $0$' if for 
any $x \in X, f(x) = x$ (which means that it is then a singleton, fixed by $f$). To give two easy examples:

If $f(x) = x+1$ then $f$ has just one orbital, the whole of ${\mathbb Q}$, and this has parity $+1$.

If $f(x) = 2x$, then $f$ has $3$ orbitals, $(-\infty, 0)$ of parity $-1$, $\{0\}$ of parity $0$, and $(0, \infty)$ of parity $+1$.

The {\em orbital pattern} of $f$ is then the $3$-coloured linear order with colours in $\{\pm 1, 0\}$ comprising the family of all orbitals regarded as a linearly ordered set under the induced ordering, with 
the associated colouring. Charles Holland remarked that $f$ and $g$ are conjugate if and only if their orbital patterns are isomorphic.

The following formula is due to McCleary \cite{McCleary}:

\vspace{.1in}

\noindent $\mbox{comp}(x):~ (\exists y)(\exists z)(y \neq 1 \wedge z \neq 1 \wedge (\forall t)~(y z^{x^t} = z^{x^t} y)).$

\vspace{.1in}

Here `comp' stands for `comparable' (with the identity), and it expresses the fact that the element concerned is either positive or negative, where $f$ is said to be `positive' if $f(x) \ge x$ for all $x$, 
and `negative' if $f(x) \le x$ for all $x$. Note that if two elements of $G$ have disjoint supports, then they commute, and the formula captures the situation that there are non-trivial elements, 
instantiations of $y$ and $z$, such that the support of $y$ is to the left of that of $z$, so that any conjugate of $z$ by a positive element still has support disjoint from that of $y$ (or the other way 
round in the case of a negative element). 

\begin{lemma}\label{2.1} For $f \in G$, $G \models {\rm comp}(f)$ if and only if $f$ is either positive or negative. \end{lemma}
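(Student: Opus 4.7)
The plan is to prove the two implications separately. For the forward direction, suppose $f$ is positive, so $f(x) \ge x$ for all $x \in \mathbb{Q}$. I first observe that positivity is preserved under conjugation: for any $t \in G$, the inequality $(tft^{-1})(x) \ge x$ is equivalent (applying the order-preserving $t^{-1}$) to $f(t^{-1}(x)) \ge t^{-1}(x)$, which holds. Hence every $x^t$ is positive, and any positive automorphism $g$ sends the interval $(1, \infty)$ into itself, since $g(u) \ge u > 1$ whenever $u > 1$. I then pick any non-trivial $y \in G$ whose support lies in $(-\infty, 0)$ and any non-trivial $z \in G$ whose support lies in $(1, \infty)$. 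For every $t$, the support of $z^{x^t}$, namely $x^t(\mathrm{supp}(z))$, lies in $(1, \infty)$ and is therefore disjoint from that of $y$. Since elements of $G$ with disjoint supports commute, $(y, z)$ witnesses $\mathrm{comp}(f)$. The negative case is handled symmetrically, using $\mathrm{supp}(y) \subseteq (0, \infty)$ and $\mathrm{supp}(z) \subseteq (-\infty, -1)$.

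For the converse, assume $f$ is neither positive nor negative, so that the orbital pattern of $f$ contains at least one orbital of parity $+1$ and at least one of parity $-1$. I must show that for every non-trivial $y, z$, some $t$ yields $y z^{x^t} \ne z^{x^t} y$. The plan is to exploit Holland's remark, quoted in the excerpt, that the conjugates of $f$ are precisely the elements of $G$ with its orbital pattern; this gives great flexibility in placing the moving and fixed intervals of $g = x^t$. Concretely, if $z$ has a fixed point $q_0$ and a moved point $q_1$, I would choose $g$ conjugate to $f$ such that $g(q_0) = p$ and $g(q_1) = y(p)$ for some $p \in \mathrm{supp}(y)$. Then $p$ is moved by $z^g$ while $y(p)$ is fixed by $z^g$, giving $z^g(y(p)) = y(p) \ne y(z^g(p))$, so $[y, z^g] \ne 1$.

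The main obstacle is verifying that such a $g$ can always be realised as a conjugate of $f$, under all possible configurations of $y$ and $z$. This relies on the coexistence of $+1$ and $-1$ orbitals in $f$, which intuitively supplies fixed points flanked by moving intervals and hence enough freedom under conjugation to position these features wherever required. The most delicate sub-case is when $z$ has no fixed points at all, so $\mathrm{supp}(z) = \mathbb{Q}$ and the above fixed-versus-moved strategy collapses. Here one must instead arrange that the orbit partition of $\mathbb{Q}$ under $z^g$ fails to be preserved by $y$, once again using the $\pm 1$ orbitals of $f$ to let $g$ have orbits of arbitrary local length. A detailed treatment of both sub-cases in the language of ordered permutation groups is developed in \cite{Glass} and \cite{Truss}.
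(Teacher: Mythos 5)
Your right-to-left direction is correct and complete, and it is essentially the argument the paper itself sketches informally just after displaying the formula (the paper gives no formal proof of this lemma, deferring to \cite{Truss}): conjugates of a positive element are positive, hence map $\mathrm{supp}(z)\subseteq(1,\infty)$ into $(1,\infty)$, keeping it disjoint from $\mathrm{supp}(y)\subseteq(-\infty,0)$, and elements with disjoint supports commute. (One harmless slip: with $g(q_0)=p$ for $q_0$ a fixed point of $z$, it is $p$ that is \emph{fixed} by $z^g=gzg^{-1}$ and $y(p)=g(q_1)$ that is moved, not the other way round as you write; the non-commutation conclusion survives under either assignment.)

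The genuine gap is in the other direction, exactly where you locate ``the main obstacle.'' Holland's remark does not supply the flexibility you invoke: the order configurations $(c,g(c),d,g(d))$ realisable by conjugates $g=tft^{-1}$ on a given pair $c<d$ are precisely the order types of $(u,f(u),v,f(v))$ for $u<v$, and these are genuinely constrained. For instance, if every positive orbital of $f$ precedes every negative one (say $f$ positive on $(-\infty,0)$, negative on $(0,\infty)$), then no conjugate $g$ satisfies $g(c)<c$ and $g(d)>d$ with $c<d$; only ``contracting'' placements are available. Hence the conjugate with $g(q_0)=p$, $g(q_1)=y(p)$ may fail to exist for your first choice of witnesses, and the proof must select between the two assignments of $\{p,y(p)\}$ to $\{q_0,q_1\}$ and between different choices of $q_0\in\mathrm{Fix}(z)$, $q_1\in\mathrm{supp}(z)$, $p\in\mathrm{supp}(y)$, according to whether $f$ has a positive orbital to the left of a negative one or vice versa. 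That case analysis is the actual content of the converse and is absent. The sub-case $\mathrm{supp}(z)=\mathbb{Q}$ is likewise only gestured at: there $\mathrm{supp}(z^g)=\mathbb{Q}$ for every $g$, so no support-based or fixed-point-based separation is possible, and one needs a different invariant (e.g.\ that anything commuting with $w$ must permute the orbitals of $w$ preserving parity, with a further separate argument when $z$ is coterminal and has a single orbital). As written, the converse is a correct plan plus a citation rather than a proof.
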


Next we move towards expressing disjointness of supports.

\vspace{.1in}

\noindent $\mbox{apart}(x,y):~ (\exists z)(\mbox{comp}(z) \wedge z \neq 1 \wedge (\forall t)~(x y^{z^t} = y^{z^t} x)).$

\begin{lemma}\label{2.2} For any $f, g \in G$, $G \models {\rm apart}(f, g)$ if and only if either ${\rm supp}(f) < {\rm supp}(g)$ or ${\rm supp}(g) < {\rm supp}(f)$.  \end{lemma}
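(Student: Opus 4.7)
My plan is to prove the two directions separately, by a direct construction in the forward direction and by a more subtle orbital argument in the converse.

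For the ``if'' direction, suppose without loss of generality that $\mbox{supp}(f) < \mbox{supp}(g)$. I pick any cut of ${\mathbb Q}$ lying strictly between the two supports, obtaining a convex partition ${\mathbb Q} = L \cup U$ with $\mbox{supp}(f) \subseteq L$ and $\mbox{supp}(g) \subseteq U$. I then define $z \in G$ to be the identity on $L$ and to act as some non-trivial strictly increasing automorphism of $U$; such an automorphism exists because $U$ is a countable dense linear suborder of ${\mathbb Q}$. Then $z$ is positive and non-trivial, so Lemma \ref{2.1} yields $\mbox{comp}(z)$ and $z \neq 1$. Since $U$ is $z$-invariant, every power $z^{t}$ preserves $U$, and therefore $z^{t}(\mbox{supp}(g)) \subseteq U$ is disjoint from $\mbox{supp}(f) \subseteq L$. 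Automorphisms with disjoint supports commute, so $f$ commutes with $g^{z^{t}}$ for every $t$, witnessing $G \models \mbox{apart}(f,g)$.

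For the converse, assume $G \models \mbox{apart}(f,g)$ with witness $z$, which I may take to be positive and non-trivial (the negative case follows by replacing $z$ with $z^{-1}$). The commutation of $f$ with each $g^{z^{t}}$, together with the observation that commuting automorphisms preserve each other's supports, gives that $f$ maps $\mbox{supp}(g^{z^{t}}) = z^{t}(\mbox{supp}(g))$ to itself for every $t$. I argue by contradiction: if neither $\mbox{supp}(f) < \mbox{supp}(g)$ nor $\mbox{supp}(g) < \mbox{supp}(f)$, then the supports either share a point or interlace across the ordering, and in either case I aim to pick $t$ so that the shifted support $z^{t}(\mbox{supp}(g))$ traps some point $a \in \mbox{supp}(f)$ while $f(a)$ escapes, contradicting the $f$-invariance derived above.

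The main obstacle is making this last step rigorous. The key geometric input is that a positive non-trivial $z$ admits an orbital of parity $+1$ on which every $z$-orbit is cofinal and coinitial, so that large positive and negative powers of $z$ slide $\mbox{supp}(g)$ arbitrarily far along such orbitals. Combined with the hypothesised failure of linear separation between the supports, this sliding produces the mismatch between $a$ and $f(a)$ required for the contradiction. The argument splits according to whether $\mbox{supp}(f) \cap \mbox{supp}(g) \neq \emptyset$ or the two supports merely interlace without meeting, and in each case relies on the orbital classification recalled above together with standard techniques from \cite{Glass}.
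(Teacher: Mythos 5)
There is a genuine gap, and it stems from a misreading of the formula. In $\mathrm{apart}(x,y)$ the quantifier $(\forall t)$ ranges over $G$ (the paper states that all variables in this section range over $G$), and $z^{t}$ denotes the conjugate $tzt^{-1}$, as fixed by the paper's convention $x^{y}=yxy^{-1}$; it is not the $t$-th power of $z$ for an integer $t$ (which would not even be a first-order quantification). So the condition is that $f$ commutes with $g^{h}$ for \emph{every conjugate} $h$ of $z$, and the whole point is that such conjugates can be supported essentially anywhere while retaining the sign of $z$. For the forward direction the relevant observation is that if $\mathrm{supp}(f)<\mathrm{supp}(g)$ then \emph{any} positive $z\neq 1$ is a witness: every conjugate $h=z^{t}$ is again positive, so $\mathrm{supp}(g^{h})=h(\mathrm{supp}(g))$ moves weakly rightwards and stays strictly to the right of $\mathrm{supp}(f)$, giving disjoint supports and hence commutation. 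Your particular $z$ happens to be positive, so it does work, but not for the reason you give (a conjugate of $z$ by an arbitrary $t$ need not preserve $U$ or be supported in $U$ at all), and your ``without loss of generality'' hides a genuine asymmetry: when $\mathrm{supp}(g)<\mathrm{supp}(f)$ you must take $z$ \emph{negative}, since a positive $z$ has conjugates that drag $\mathrm{supp}(g)$ rightwards across $\mathrm{supp}(f)$ and destroy the universally quantified commutation.

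For the converse the misreading is fatal: under the integer-power reading the lemma is simply false. Take $f=g\neq 1$ and let $z$ be a positive bump whose support is disjoint from $\mathrm{supp}(g)$; then $g^{z^{n}}=g$ for every integer $n$, so your version of the condition reduces to $fg=gf$, which holds, even though the supports are not separated. Hence the contradiction you are aiming for cannot be produced by ``sliding along orbitals of $z$'' --- powers of $z$ only move $\mathrm{supp}(g)$ within the orbitals of $z$, over whose location you have no control. With the conjugate reading the argument does go through: conjugates of a fixed positive bump realise every placement of its orbital pattern, so when the supports are not separated one chooses $t$ so that $\mathrm{supp}(g^{z^{t}})=z^{t}(\mathrm{supp}(g))$ contains a point $a$ of some orbital of $f$ while being bounded above inside that orbital, whence some power of $f$ moves $a$ out of $\mathrm{supp}(g^{z^{t}})$ and $f$ cannot commute with $g^{z^{t}}$. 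Even setting the reading aside, your converse is only a plan (``I aim to pick $t$ so that \ldots'', ``the main obstacle is making this last step rigorous''), so the essential step is not carried out. The paper itself omits the proof, remarking only that it follows the proof of Lemma~\ref{2.1} with the roles of the variables permuted and citing \cite{Truss}; the intended argument is the conjugacy one just described.
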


Note that the degenerate case in which $x$ or $y$ is 1 is allowed. The proof of this lemma follows similar lines to the preceding one with the role of the variables permuted.

The next formula characterizes `bumps', which are non-identity elements having exactly one non-trivial orbital.

\vspace{.1in}

\noindent $\mbox{bump}(x): x \neq 1 \wedge (\forall y)(\forall z)(x = yz \wedge \mbox{apart}(y,z) \rightarrow (y = 1 \vee z = 1)).$

\begin{lemma}\label{2.3} For any $f \in G$, $G \models {\rm bump}(f)$ if and only if $f$ is a bump.  \end{lemma}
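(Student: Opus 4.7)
}

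My plan is to argue both directions by using the way $\mbox{apart}(y,z)$ forces a Dedekind cut of $\mathbb Q$ that separates the supports of $y$ and $z$, and then exploit convexity of a single orbital.

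First I would set up the following standard observation, which is needed in both directions. If $y,z \in G$ satisfy $\mbox{apart}(y,z)$, say $\mbox{supp}(y) < \mbox{supp}(z)$, pick a Dedekind cut $C$ with $\mbox{supp}(y) \le C \le \mbox{supp}(z)$. Then $z$ is the identity below $C$ and $y$ is the identity above $C$; since both are order-preserving bijections, each of them maps the `lower side' and `upper side' of $C$ to itself. Consequently $f := yz$ satisfies $f = y$ below $C$ and $f = z$ above $C$, and $\mbox{supp}(f) = \mbox{supp}(y) \cup \mbox{supp}(z)$.

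For the $(\Leftarrow)$ direction, assume $f$ is a bump with unique nontrivial orbital $I$, and suppose $f = yz$ with $\mbox{apart}(y,z)$. Choose $C$ as above. The orbital $I$ is convex, and since every element of $I$ lies in $\mbox{supp}(y) \cup \mbox{supp}(z)$, the set $I$ splits as $(I \cap \{<C\}) \cup (I \cap \{>C\})$. If both pieces were nonempty, then because $f$ preserves each side of $C$, picking $x \in I$ below $C$ forces $f^n x < C$ for all $n$, so no iterate of $f$ from $x$ can reach a point above $C$, contradicting the fact that both pieces lie in the same $f$-orbital. Hence one side is empty, so one of $y,z$ has empty support, i.e.\ equals $1$.

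For the $(\Rightarrow)$ direction, I would prove the contrapositive: if $f \neq 1$ is not a bump, then $f$ has two distinct nontrivial orbitals $I_1 < I_2$. Pick any Dedekind cut $C$ of $\mathbb Q$ with $I_1 \le C \le I_2$; since every orbital of $f$ is convex and hence lies entirely on one side of $C$, the map $f$ preserves both $\{<C\}$ and $\{>C\}$. Define
\[
y(x) = \begin{cases} f(x) & x<C \\ x & x>C \end{cases}, \qquad z(x) = \begin{cases} x & x<C \\ f(x) & x>C. \end{cases}
\]
Both $y$ and $z$ are order-preserving bijections of $\mathbb Q$ (each restricts to $f$ on one side and to the identity on the other), so they lie in $G$, and a direct check gives $yz = f$. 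Their supports are $\mbox{supp}(y) \subseteq \{<C\}$ and $\mbox{supp}(z) \subseteq \{>C\}$, so $\mbox{apart}(y,z)$ holds by Lemma \ref{2.2}; moreover $y \neq 1$ since $I_1 \subseteq \mbox{supp}(y)$, and $z \neq 1$ since $I_2 \subseteq \mbox{supp}(z)$. This witnesses the failure of $\mbox{bump}(f)$.

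The only genuinely delicate point, and the one I would write out most carefully, is the orbital-straddling argument in the $(\Leftarrow)$ direction: one must be sure that a single orbital of $f$ cannot be split into two nonempty pieces each invariant under $f$. This follows from the definition of orbital as an equivalence class under $x \sim y \iff \exists m,n\ f^m x \le y \le f^n x$, since such an $y$ on the far side of $C$ would have to satisfy $y \le f^n x < C$, a contradiction.
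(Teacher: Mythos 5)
Your argument is correct and is precisely the standard decomposition-at-a-cut argument that the paper intends here (the paper omits the proof of Lemma \ref{2.3}, deferring to \cite{Truss}, but describes exactly this intuition of indecomposability into pieces with disjoint supports). The two essential points are both present: that ${\rm apart}(y,z)$ forces $f = yz$ to agree with $y$ below a separating cut and with $z$ above it, with ${\rm supp}(f) = {\rm supp}(y) \cup {\rm supp}(z)$, and that a single orbital cannot straddle a cut whose two sides are each preserved by $f$. One small repair is needed in the converse direction: ``any'' cut $C$ with $I_1 \le C \le I_2$ will not do, since a third nontrivial orbital lying strictly between $I_1$ and $I_2$ could straddle $C$, in which case $f$ does not preserve the two sides and your $y$ and $z$ fail to be bijections; instead take $C$ to be the cut determined by $\sup I_1$ (lower set $\{x : (\exists a \in I_1)\, x \le a\}$), for which every orbital, being convex and disjoint from or equal to $I_1$, genuinely does lie entirely on one side.
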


The intuition is that $x$ is `indecomposable' into pieces with disjoint supports. 

In the next lemma we need the notion of `restriction'. If $f, g \in G$ then $f$ is a {\em restriction} of $g$ if for all $x$ in the support of $f$, $f(x) = g(x)$, and if $X$ is the support of $f$, then $f$
is said to be the {\em restriction} of $g$ to $X$.

\vspace{.1in}

\noindent $\mbox{orbital}(x,y): \mbox{bump}(x) \wedge (\exists y_1)(\exists y_2)(\mbox{apart}(x, y_1) \wedge \mbox{apart}(x, y_2) \wedge \mbox{apart}(y_1, y_2) \wedge y = x y_1 y_2).$

(Note that in this formula, one or both of $y_1$, $y_2$ could be the identity.) 

\begin{lemma}\label{2.4} For any $f \in G$, $G \models {\rm orbital}(f,g)$ if and only if $f$ is the restriction of $g$ to one of its non-trivial orbitals.  \end{lemma}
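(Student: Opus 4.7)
The plan is to establish both directions of the biconditional by exploiting two structural facts used repeatedly already: elements of $G$ with pairwise disjoint supports commute and their product agrees with each factor on that factor's support; and for a bump the support is a single convex orbital which the element maps onto itself.

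For the ``$\Leftarrow$'' direction, assume $f$ is the restriction of $g$ to a non-trivial orbital $X$. Then $f$ is a bump by Lemma \ref{2.3}. I would let $X_L$ be the union of the orbitals of $g$ lying entirely to the left of $X$ and $X_R$ the union of those on the right, define $y_1$ to agree with $g$ on $X_L$ and be the identity elsewhere, and $y_2$ analogously using $X_R$. Since each orbital of $g$ is $g$-invariant, both $y_1,y_2$ are order-preserving bijections of $\mathbb{Q}$, hence elements of $G$. Their supports are contained in $X_L$ and $X_R$, so $\mathrm{supp}(y_1)<\mathrm{supp}(f)<\mathrm{supp}(y_2)$, and the three required instances of $\mathrm{apart}$ follow from Lemma \ref{2.2}. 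The three factors pairwise commute and agree with $g$ on their own supports, while $fy_1y_2$ is the identity off $\mathrm{supp}(g)$, so $fy_1y_2=g$.

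For ``$\Rightarrow$'', assume the witnesses $y_1, y_2$ exist. Using Lemma \ref{2.2} and relabelling if necessary, we may arrange $\mathrm{supp}(y_1)<\mathrm{supp}(f)<\mathrm{supp}(y_2)$, with either of $y_1,y_2$ possibly the identity. Pairwise disjointness of supports makes the three elements commute, and $g=fy_1y_2$ acts on each of these three sets exactly as the corresponding factor. In particular $g$ and $f$ coincide on $X:=\mathrm{supp}(f)$. As $f$ is a bump, $X$ is a single convex orbital of $f$, and $f$ (hence $g$) maps $X$ onto itself. Therefore for any $x\in X$ the $g$-orbit $\{g^n x:n\in\mathbb{Z}\}$ is trapped in $X$ and equals the $f$-orbit, which is both cofinal and coinitial in $X$ because $f$ has no fixed points on $X$. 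Hence the $g$-orbital through $x$ is exactly $X$, so $f$ is the restriction of $g$ to the non-trivial orbital $X$.

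The main subtlety is the last step of the ``$\Rightarrow$'' direction. One might be tempted to argue that the boundary of $X$ in $\mathbb{Q}$ consists of fixed points of $g$, but this can fail: a rational endpoint of $X$ could lie in $\mathrm{supp}(y_2)$ (since the ``apart'' relation only says every point of $\mathrm{supp}(y_2)$ strictly exceeds every point of $\mathrm{supp}(f)$, and endpoints of $X$ are not in $\mathrm{supp}(f)$), so $g$ may well move points arbitrarily close to $X$ from outside. The correct move is to work with the orbit definition of an orbital rather than with support: because $g$ agrees with $f$ on $X$ and $X$ is $f$-invariant, the $g$-orbit of any point of $X$ coincides with its $f$-orbit and never escapes $X$, which forces the orbital to be exactly $X$.
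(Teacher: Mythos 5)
Your proof is correct; the paper itself omits the argument for this lemma (deferring to \cite{Truss}), and what you give is precisely the intended standard argument: decompose $g$ into its restriction to one orbital times the left and right remainders for one direction, and for the converse use pairwise disjointness of supports to see that $g$ agrees with $f$ on $\mathrm{supp}(f)$ and then the orbit (rather than boundary-point) characterisation of orbitals to conclude that $\mathrm{supp}(f)$ is a full $g$-orbital. Your flagged subtlety about endpoints possibly being moved by $y_2$ is a genuine and correctly handled point; the only cosmetic slip is the claim that one can always relabel so that $\mathrm{supp}(y_1)<\mathrm{supp}(f)<\mathrm{supp}(y_2)$ (both witnesses could lie on the same side of $f$), but nothing in your argument uses more than pairwise disjointness, so this is harmless.
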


\noindent ${\rm disj}(x, y): (\forall z)(\forall t)({\rm orbital}(z,x) \wedge \mbox{orbital}(t,y) \to \mbox{apart}(z,t)).$

\noindent ${\rm cont}(x, y): (\forall z)({\rm disj}(y, z) \to {\rm disj}(x,z))$.

\noindent ${\rm samesupport}(x, y): {\rm cont}(x, y) \wedge {\rm cont}(y,x)$.

\begin{lemma}\label{2.5} For any $f, g \in G$, 

(i) $G \models {\rm disj}(f,g)$ if and only if $f$ and $g$ have disjoint supports.  

(ii) $G \models {\rm cont}(f,g)$ if and only if the support of $f$ is contained in that of $g$.

(ii) $G \models {\rm samesupport}(f,g)$ if and only if $f$ and $g$ have equal supports. \end{lemma}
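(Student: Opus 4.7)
I would handle the three parts in sequence, with (i) serving as the engine for (ii) and (iii).

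For (i), the direction $(\Leftarrow)$ is the easy one: assuming $\mathrm{supp}(f)\cap\mathrm{supp}(g)=\emptyset$, any $z$ with $\mathrm{orbital}(z,f)$ has by Lemma~\ref{2.4} support inside $\mathrm{supp}(f)$, and any $t$ with $\mathrm{orbital}(t,g)$ has support inside $\mathrm{supp}(g)$; these are then two disjoint non-empty convex subsets of $\mathbb{Q}$, hence linearly separated, hence $\mathrm{apart}(z,t)$ holds by Lemma~\ref{2.2}. For $(\Rightarrow)$, I argue contrapositively: given $p\in\mathrm{supp}(f)\cap\mathrm{supp}(g)$, take $z$ and $t$ to be the restrictions of $f$ and $g$ to the orbitals through $p$; both supports contain $p$, so neither is linearly below the other, and $\mathrm{apart}(z,t)$ fails.

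Part (ii) is derived from (i). Direction $(\Rightarrow)$: if $\mathrm{supp}(f)\subseteq\mathrm{supp}(g)$, then any $z$ with $\mathrm{disj}(g,z)$ has $\mathrm{supp}(z)$ disjoint from $\mathrm{supp}(g)$ by (i), hence disjoint from $\mathrm{supp}(f)$, hence $\mathrm{disj}(f,z)$ by (i) again. Direction $(\Leftarrow)$: contrapositively, pick $p\in\mathrm{supp}(f)\setminus\mathrm{supp}(g)$, so that $p$ lies in an open $f$-orbital $X$ while $g$ fixes $p$. The goal is to produce an open interval $I\subseteq X$ lying in $\mathrm{fix}(g)$ and to take $h$ to be any bump with $\mathrm{supp}(h)=I$; then (i) gives $\mathrm{disj}(g,h)$ while $\mathrm{supp}(h)\subseteq X\subseteq\mathrm{supp}(f)$ forces $\mathrm{disj}(f,h)$ to fail, contradicting $\mathrm{cont}(f,g)$. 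Part (iii) follows immediately by applying (ii) to both $(f,g)$ and $(g,f)$.

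The hard step is locating the interval $I$ in the converse of (ii). If the chosen $p$ happens to be a two-sided limit of $\mathrm{supp}(g)$, no neighbourhood of $p$ lies in $\mathrm{fix}(g)$, and one must exploit the freedom to slide within $X$ to a genuine interior point of $\mathrm{fix}(g)\cap X$. This manoeuvre implicitly requires reading "support" as the closure of the moved-point set (equivalently, the complement of $\mathrm{int}(\mathrm{fix})$), so that $\mathrm{cont}$ really expresses containment of closed supports; I would make that convention explicit, under which (i)--(iii) line up uniformly and the bump $h$ can always be constructed.
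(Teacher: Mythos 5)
Your plan is correct and follows the route the paper intends: the paper omits the proof of this lemma (deferring to \cite{Truss}), but the obvious argument is exactly yours --- reduce (i) to Lemma \ref{2.2} via the observation that two disjoint convex sets in a chain are linearly separated, derive (ii) from (i) by producing a witnessing bump, and get (iii) by symmetrizing (ii). Your closing observation is not a pedantic side issue but a genuine correction to the statement: with ``support'' read literally as the set of moved points, part (ii) is false. Take $f(x)=x+1$ and $g(x)=2x$; then ${\rm supp}(f)={\mathbb Q}\not\subseteq{\mathbb Q}\setminus\{0\}={\rm supp}(g)$, yet any $z$ with ${\rm disj}(g,z)$ must have every non-trivial orbital entirely below $(0,\infty)$ and entirely above $(-\infty,0)$, forcing $z=1$, so ${\rm cont}(f,g)$ holds vacuously. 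The same pair shows (iii) fails literally, since ${\rm samesupport}(f,g)$ holds while the supports differ at $0$. As you say, the formula ${\rm cont}(f,g)$ actually expresses ${\rm supp}(f)\subseteq\overline{{\rm supp}(g)}$ (equivalently, containment of the closures, since supports are open), and under that convention your construction of the bump $h$ on an interval $I\subseteq X$ missing $\overline{{\rm supp}(g)}$ goes through without the ``sliding'' difficulty you mention. The one thing I would ask you to write out explicitly is the forward direction of (ii) under the corrected hypothesis: from ${\rm supp}(f)\subseteq\overline{{\rm supp}(g)}$ and ${\rm disj}(g,z)$ you need ${\rm supp}(z)\cap{\rm supp}(f)=\emptyset$, which follows because ${\rm supp}(z)$ is open and an open set disjoint from a set is disjoint from its closure. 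With that line added, the proof is complete.
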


The next two formulae give us some flexibility in dealing with bumps and their supports.

\vspace{.1in}

\noindent $\mbox{between}(x,y,z): {\rm bump}(x) \wedge {\rm bump}(y) \wedge {\rm bump}(z) \wedge {\rm disj}(x,y) \wedge {\rm disj}(x,z) \wedge {\rm disj}(y,z) \wedge \forall t({\rm disj}(t,y) \to 
{\rm disj}(x^t, z)).$

\begin{lemma}\label{2.6} For any $f, g, h \in G$, $G \models {\rm between}(f,g,h)$ if and only if $f, g, h$ are bumps with pairwise disjoint supports such that the support of $g$ lies between the other two 
supports.  \end{lemma}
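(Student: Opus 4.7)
The plan is to verify the equivalence in two directions, reducing immediately to the final conjunct $\forall t(\mathrm{disj}(t,y) \to \mathrm{disj}(x^t,z))$ since Lemmas~\ref{2.3} and~\ref{2.5}(i) handle the other six. Writing $A$, $B$, $C$ for the supports of $f$, $g$, $h$ respectively, each is a single non-trivial orbital and is therefore convex, so the three supports are pairwise disjoint convex subsets of $\mathbb{Q}$, linearly ordered by the inherited order.

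For the forward implication, I would assume $B$ lies between $A$ and $C$; after interchanging $f$ and $h$ if necessary, take $A < B < C$. Given any $t \in G$ with $\mathrm{disj}(t,g)$, Lemma~\ref{2.5}(i) yields $\mathrm{supp}(t) \cap B = \emptyset$, so $t$ fixes $B$ pointwise. Order-preservation of $t$ then sends every rational strictly below all of $B$ to another such rational, so $\mathrm{supp}(f^t) = t(A)$ remains strictly to the left of $B$ and hence is disjoint from $C$, giving $\mathrm{disj}(f^t,h)$ as required.

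For the converse, I would argue contrapositively: if $B$ is not between $A$ and $C$, then either $A$ or $C$ lies in the middle, and by symmetry it suffices to treat the representative case $B < A < C$. Choose a rational $r$ with $B < r < A$ and points $p \in A$, $q \in C$; homogeneity of $(\mathbb{Q}, \le)$ then provides $t \in G$ fixing $(-\infty, r]$ pointwise and sending $p$ to $q$, built by combining the identity on $(-\infty, r]$ with any order-automorphism of $(r, \infty) \cap \mathbb{Q}$ carrying $p$ to $q$. This $t$ has support contained in $(r,\infty)$, disjoint from $B$, so $\mathrm{disj}(t,g)$ holds; but since $p \in A$ is moved by $f$, its image $q = t(p)$ is moved by $f^t$, placing $q$ in $\mathrm{supp}(f^t) \cap C$ and contradicting $\mathrm{disj}(f^t,h)$. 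The remaining bad orderings are handled by mirror-image constructions (fixing a half-line on the opposite side or swapping the roles of $f$ and $h$), so the only essential step is this construction of the witness $t$, and that reduces at once to homogeneity of the rationals.
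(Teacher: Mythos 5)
The paper gives no proof of Lemma~\ref{2.6} (it is among those deferred to \cite{Truss}), so your argument can only be judged on its own terms; it is the natural argument and is essentially correct. The forward direction is fine: disjointness of $\mathrm{supp}(t)$ from the middle support $B$ forces $t$ to fix $B$ pointwise, and order-preservation then traps $\mathrm{supp}(f^t) = t(A)$ on the same side of $B$ as $A$, hence away from $C$. The converse is also structurally right, because in every ``bad'' ordering of three pairwise disjoint convex supports, $A$ and $C$ lie on the same side of $B$, so an automorphism supported on that side can carry a point of $A$ into $C$ while commuting with nothing required of $g$.

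The one step that can literally fail is ``choose a rational $r$ with $B < r < A$.'' Non-trivial orbitals are open intervals whose endpoints may be irrational, so if $\sup B = \inf A$ is an irrational cut there is no such rational $r$, and your explicit witness $t$ (identity on $(-\infty,r]$, arbitrary on $(r,\infty)$) is not available as stated. The repair is immediate: take $t$ supported on $\{x \in \mathbb{Q} : x > \sup B\}$, which is an order-isomorphic copy of $\mathbb{Q}$ containing both $A$ and $C$ and disjoint from $B$, and let it carry $p$ to $q$ there. With that adjustment (and its mirror images for the other bad orderings), the proof is complete.
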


\vspace{.1in}

\noindent $\mbox{adj}(x,y): \mbox{bump}(x) \wedge \mbox{bump}(y) \wedge \neg \exists z \, \mbox{between}(x, z, y).$

\noindent $\mbox{union}(x,y,z): \mbox{bump}(z) \wedge \forall t(\mbox{bump}(t) \wedge \mbox{cont}(x,t) \wedge \mbox{cont}(y,t) \to \mbox{cont}(z,t)).$

\begin{lemma}\label{2.7} For any $f, g \in G$, $G \models {\rm adj}(f,g)$ if and only if $f, g$ are bumps with disjoint supports which have an endpoint in common, and for any $f$, $g$ and $h$, 
$G \models {\rm union}(f,g,h)$ if and only if the support of $h$ is the least convex open interval containing the supports of $f$ and $g$. \end{lemma}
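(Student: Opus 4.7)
The plan is to prove the two equivalences by unfolding quantifiers and invoking the preceding lemmas. Throughout I use that the support of any bump is a non-empty open convex subset of $\mathbb{Q}$ whose finite endpoints are irrational (as Dedekind cuts).

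For the adj equivalence, by Lemma \ref{2.6} the sub-formula $\exists z\, \mbox{between}(f,z,g)$ expresses that some bump has support strictly between $\mbox{supp}(f)$ and $\mbox{supp}(g)$, so $\mbox{adj}(f,g)$ says that $f$ and $g$ are bumps admitting no bump wedged strictly between them. For the ``if'' direction, suppose $\mbox{supp}(f)$ and $\mbox{supp}(g)$ are disjoint and share a common endpoint; then no rational separates them, so no non-empty open convex subset of $\mathbb{Q}$ fits strictly in between, and in particular no bump support does. For the converse, if the disjoint supports do not share an endpoint, I would pick a rational $p$ strictly between them and produce a witnessing bump on a short open interval around $p$ disjoint from both supports, by transporting a fixed-point-free order-automorphism of $\mathbb{Q}$ onto that interval and extending by the identity.

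For the union equivalence, by Lemma \ref{2.5} the formula $\mbox{union}(f,g,h)$ asserts that $h$ is a bump such that $\mbox{supp}(h) \subseteq \mbox{supp}(t)$ for every bump $t$ with $\mbox{supp}(f) \cup \mbox{supp}(g) \subseteq \mbox{supp}(t)$. Let $I$ denote the least open convex subset of $\mathbb{Q}$ containing $\mbox{supp}(f) \cup \mbox{supp}(g)$. By convexity every admissible $\mbox{supp}(t)$ contains $I$, and there exists a bump $t_0$ with $\mbox{supp}(t_0) = I$ (obtained from a fixed-point-free order-automorphism of $I$). Testing against $t_0$ forces $\mbox{supp}(h) \subseteq I$, while a bump with $\mbox{supp}(h) = I$ manifestly satisfies the condition. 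Combined with the minimality role that $h$ plays as the intended ``union'' of $f$ and $g$, namely that it must contain both $\mbox{supp}(f)$ and $\mbox{supp}(g)$, convexity then pins $\mbox{supp}(h)$ to be exactly $I$.

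The main obstacle in both parts is supplying the required witness bump: a bump separating $\mbox{supp}(f)$ from $\mbox{supp}(g)$ in the non-adjacent case of adj, and a bump with support exactly $I$ in the union case. Both reduce to the standard fact that every non-degenerate open convex subset of $\mathbb{Q}$ is order-isomorphic to $\mathbb{Q}$ and therefore carries a fixed-point-free order-automorphism (e.g.\ the image of $x \mapsto x+1$), which I extend by the identity outside that subset to obtain an element of $G$ whose bump support is as prescribed.
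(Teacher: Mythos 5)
Your overall strategy --- unfolding the formulae via Lemmas \ref{2.5} and \ref{2.6} and manufacturing witness bumps by transporting a fixed-point-free automorphism of ${\mathbb Q}$ onto a prescribed open convex set and extending by the identity --- is the right one (the paper defers this proof to \cite{Truss}, so there is no in-text argument to compare against), but both halves of your argument have a gap at the same spot: you verify only one of the two ways the right-hand side can fail, and in each case the neglected way is exactly the one the printed formula does not control.

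For ${\rm adj}$: your contrapositive treats only the case in which ${\rm supp}(f)$ and ${\rm supp}(g)$ are disjoint but share no endpoint. If the supports are \emph{not} disjoint (e.g.\ $f=g$ a bump), then ${\rm between}(f,z,g)$ is false for every $z$ simply because its conjunct ${\rm disj}(f,g)$ fails, so $\neg\exists z\,{\rm between}(f,z,g)$ holds vacuously and ${\rm adj}(f,g)$ is satisfied, while the right-hand side of the equivalence is not. A complete proof must either treat ${\rm disj}(x,y)$ as an (intended) extra conjunct of ${\rm adj}$ or otherwise dispose of this case; as the formula is printed it cannot be disposed of. For ${\rm union}$: what you actually prove is that ${\rm union}(f,g,h)$ holds iff $h$ is a bump with ${\rm supp}(h)\subseteq I$, where $I$ is the least open convex set containing ${\rm supp}(f)\cup{\rm supp}(g)$ --- your test element $t_0$ with ${\rm supp}(t_0)=I$ gives the upper bound, and every admissible $t$ has ${\rm supp}(t)\supseteq I$, so the formula imposes nothing more. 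Your final sentence, that $h$ ``must contain both ${\rm supp}(f)$ and ${\rm supp}(g)$,'' is not a consequence of the formula: ${\rm union}(x,y,z)$ contains no conjunct ${\rm cont}(x,z)\wedge{\rm cont}(y,z)$, and a small bump supported strictly inside $I$ satisfies the displayed condition. So the ``only if'' direction of the second equivalence is not established; closing it requires adding those conjuncts (after which your argument does go through), not an appeal to the intended meaning of the word ``union.''
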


We can also now express that a bump has support the whole of ${\mathbb Q}$, or is bounded above or below:

\vspace{.1in}

\noindent ${\rm coterm}(x): {\rm bump}(x) \wedge (\forall y)({\rm disj}(x,y) \to y = 1).$

\noindent ${\rm boundedbump}(x): {\rm bump}(x) \wedge (\exists y)({\rm comp}(y) \wedge {\rm disj}(x,x^y) \wedge {\rm disj}(x,x^{y^{-1}})).$

\noindent ${\rm cof}(x): {\rm bump}(x) \wedge \neg {\rm coterm}(x) \wedge \neg {\rm boundedbump}(x).$

\begin{lemma}\label{2.8} For any $f \in G$, 

(i) $G \models {\rm coterm}(f)$ if and only if $f$ has just one orbital, which is the whole of $\mathbb Q$,

(ii) $G \models {\rm boundedbump}(f)$ if and only if $f$ has just one non-trivial orbital, which is bounded (above and below),

(iii) $G \models {\rm cof}(f)$ if and only if $f$ has just one orbital, which has the form $(-\infty, a)$ or $(a, \infty)$ for some $a$ (which may be rational or irrational). \end{lemma}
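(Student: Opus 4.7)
All three parts reduce to statements about the shape of ${\rm supp}(f)$, using Lemmas~\ref{2.3} and~\ref{2.5}(i), the identity ${\rm supp}(yfy^{-1}) = y({\rm supp}(f))$, and the structural fact that a bump has support a single convex open subset of $\mathbb{Q}$ -- hence one of $\mathbb{Q}$, $(-\infty, b)$, $(a, \infty)$, or $(a, b)$, where $a, b$ may be rational or irrational cuts.

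For (i), the direction $(\Leftarrow)$ is immediate: if ${\rm supp}(f) = \mathbb{Q}$ then any $g$ with ${\rm disj}(f, g)$ has empty support, hence $g = 1$. For $(\Rightarrow)$ I argue by contrapositive. If $f$ is a bump whose support is a proper sub-interval, then the complement contains a non-empty open convex subset $I$ isomorphic to $(\mathbb{Q}, \le)$, and by Cantor's theorem there is a non-identity automorphism of $I$; extending by the identity on $\mathbb{Q} \setminus I$ gives a non-trivial $g \in G$ supported in $I$, violating ${\rm coterm}(f)$.

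For (ii), in the direction $(\Leftarrow)$ I assume ${\rm supp}(f) = (a, b)$ with both endpoints finite. Pick rationals $p < a$ and $q > b$ and, using the homogeneity of $(\mathbb{Q}, \le)$, construct a positive $y \in G$ with $y(p) = q$; then $y(a) > y(p) = q > b$, so $y((a,b)) \subseteq (b, \infty)$ and, symmetrically, $y^{-1}((a,b)) \subseteq (-\infty, a)$. Both disjointness conditions hold, so ${\rm boundedbump}(f)$ holds. For $(\Rightarrow)$ I use the contrapositive: suppose ${\rm supp}(f)$ is unbounded, and without loss of generality take it of the form $(a, \infty)$ (the cases $(-\infty, b)$ and $\mathbb{Q}$ are analogous). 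For any comparable non-identity $y$, either $y$ or $y^{-1}$ is positive; if $y$ is positive then $y((a, \infty)) = (y(a), \infty) \subseteq (a, \infty)$, so the intersection is non-empty and ${\rm disj}(f, f^y)$ fails, and the case $y$ negative is symmetric with $y^{-1}$. Hence no witness exists and ${\rm boundedbump}(f)$ fails.

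Part (iii) follows at once from (i) and (ii): a bump that is neither coterminal nor boundedbump has orbital neither $\mathbb{Q}$ nor bounded, so of the form $(-\infty, a)$ or $(a, \infty)$. The main obstacle I anticipate is the existence of the positive witness $y$ in (ii)$(\Leftarrow)$: one must produce an element of $G$ that is \emph{globally} positive (rather than merely sending $p$ past $q$), which is done by a back-and-forth ensuring $y(x) \ge x$ at every step -- a standard but somewhat fiddly use of the homogeneity of $(\mathbb{Q}, \le)$.
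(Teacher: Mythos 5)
Your proof is correct. The paper itself gives no proof of Lemma~\ref{2.8} (it is among the results deferred to \cite{Glass} and \cite{Truss}), and your argument --- reducing each clause to the shape of the single convex, endpoint-free support of a bump and using ${\rm supp}(f^y)=y({\rm supp}(f))$ --- is exactly the standard one those sources supply. The only comment is that the ``fiddly back-and-forth'' you anticipate for the witness in (ii)$(\Leftarrow)$ is unnecessary: translation by the rational $q-p$ is already a globally positive automorphism of $({\mathbb Q},\le)$ sending $p$ to $q$.
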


So we now concentrate on cofinal elements---ones satisfying `cof' (though coterminal elements are also needed from time to time).

\vspace{.1in}

\noindent ${\rm oppsupport}(x,y):  {\rm cof}(x) \wedge {\rm cof}(y) \wedge {\rm disj}(x,y) \wedge (\forall z)({\rm disj}(x, z) \wedge {\rm disj}(y,z) \to z = 1).$

\begin{lemma}\label{2.9} For any $f, g \in G$, $G \models {\rm oppsupport}(f,g)$ if and only if for some $a$ (which may be rational or irrational), $f$ and $g$ are cofinal elements one of which has
support $(-\infty, a)$ and the other has support $(a, \infty)$.  \end{lemma}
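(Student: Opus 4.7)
The plan is to prove both directions directly, leaning on the supports lemma (Lemma~\ref{2.5}(i)) and the characterization of cofinal elements (Lemma~\ref{2.8}(iii)) throughout.

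For the ``if'' direction I would assume that $f$ has support $(-\infty, a)$ and $g$ has support $(a, \infty)$ for some real $a$. The conjuncts ${\rm cof}(x)$, ${\rm cof}(y)$ and ${\rm disj}(x,y)$ are then immediate from Lemma~\ref{2.8}(iii) and disjointness of the two half-lines. For the fourth conjunct, take any $h \in G$ with ${\rm disj}(f,h) \wedge {\rm disj}(g,h)$. By Lemma~\ref{2.5}(i), ${\rm supp}(h)$ is disjoint from $(-\infty,a) \cup (a,\infty)$, so ${\rm supp}(h) \subseteq \{a\}$. If $a \notin {\mathbb Q}$ then ${\rm supp}(h) = \emptyset$ and $h = 1$; if $a \in {\mathbb Q}$ I would squeeze, noting that $h$ fixes every rational $b < a$ and every rational $c > a$, so order-preservation forces $b \le h(a) \le c$ for all such $b, c$, and two-sided density of ${\mathbb Q}$ around $a$ gives $h(a) = a$, hence $h = 1$.

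For the ``only if'' direction, suppose ${\rm oppsupport}(f,g)$ holds. By Lemma~\ref{2.8}(iii) each of ${\rm supp}(f), {\rm supp}(g)$ is a half-line $(-\infty, a_i)$ or $(a_i, \infty)$, and two half-lines of the same direction always overlap, so disjointness forces one support to be $(-\infty, a_1)$ and the other $(a_2, \infty)$ with $a_1 \le a_2$. The heart of the argument is showing $a_1 = a_2$. Otherwise the open real interval $(a_1, a_2)$ is nonempty, so by density of ${\mathbb Q}$ it contains rationals $p < q$, and I would take $h \in G$ to be a bump with support exactly $(p,q)$ (routine back-and-forth inside that interval). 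Then ${\rm supp}(h) \subseteq (a_1, a_2)$ is disjoint from both ${\rm supp}(f)$ and ${\rm supp}(g)$, so ${\rm disj}(f,h) \wedge {\rm disj}(g,h)$ holds with $h \neq 1$, contradicting the fourth conjunct.

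The main obstacle is the rational subcase of the ``if'' direction: one has to argue carefully that ${\rm supp}(h) \subseteq \{a\}$ with $a \in {\mathbb Q}$ really does force $h = 1$, which is where order-preservation combined with two-sided density does the real work. Everything else reduces to producing a witness bump with bounded support strictly between $a_1$ and $a_2$, which is a standard construction inside ${\rm Aut}({\mathbb Q}, \le)$.
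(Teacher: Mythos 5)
Your proof is correct. The paper itself omits the proof of Lemma~\ref{2.9} (it is among those deferred to \cite{Truss}), but your argument --- reducing the fourth conjunct to the impossibility of a non-identity order-automorphism supported on the single point $\{a\}$ for the ``if'' direction, and exhibiting a bounded bump inside the gap $(a_1,a_2)$ to force $a_1 = a_2$ for the ``only if'' direction --- is exactly the standard argument the formula is designed to capture, and both delicate points (the rational-endpoint squeeze and the observation that two half-lines of the same direction must overlap) are handled properly.
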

From these we get 

\vspace{.1in}

\noindent ${\rm codesame}(x,y): \mbox{cof}(x) \wedge \mbox{cof}(y) \wedge {\rm samesupport}(x,y) \vee \mbox{oppsupport}(x,y).$ 

\begin{lemma}\label{2.10} For any $f, g \in G$, $G \models {\rm codesame}(f,g)$ if and only if for some $a$, $f$ and $g$ are cofinal elements with supports $(-\infty, a)$ or $(a, \infty)$.  \end{lemma}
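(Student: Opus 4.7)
The plan is to derive this lemma directly by unpacking the definition of codesame and applying the three already-established characterizations: Lemma 2.5(iii) for samesupport, Lemma 2.8(iii) for cof, and Lemma 2.9 for oppsupport. Since codesame$(x,y)$ is the conjunction of cof$(x)$ and cof$(y)$ with the disjunction samesupport$(x,y) \vee $ oppsupport$(x,y)$, essentially no new combinatorial content is needed; the work is just case analysis.

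For the forward direction, suppose $G \models {\rm codesame}(f,g)$. Then cof$(f)$ and cof$(g)$ hold, so by Lemma 2.8(iii) the support of $f$ has the form $(-\infty, a_1)$ or $(a_1, \infty)$, and the support of $g$ has the form $(-\infty, a_2)$ or $(a_2, \infty)$, for some (possibly irrational) $a_1, a_2$. Now either samesupport$(f,g)$ or oppsupport$(f,g)$ holds. In the first case, Lemma 2.5(iii) forces supp$(f) = $ supp$(g)$, hence $a_1 = a_2$ and both supports lie on the same side; take $a = a_1$. In the second case, Lemma 2.9 directly gives an $a$ with one support equal to $(-\infty, a)$ and the other to $(a, \infty)$.

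For the converse, suppose $f, g$ are cofinal elements whose supports each lie in $\{(-\infty, a), (a, \infty)\}$ for a common $a$. Then cof$(f)$ and cof$(g)$ hold by Lemma 2.8(iii). Either supp$(f) = $ supp$(g)$, in which case Lemma 2.5(iii) gives samesupport$(f,g)$; or the two supports are $(-\infty, a)$ and $(a, \infty)$ in some order, in which case Lemma 2.9 gives oppsupport$(f,g)$. Either way the required disjunct of codesame$(f,g)$ is satisfied.

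There is no genuine obstacle here: the content sits entirely in the earlier lemmas, and the only thing to be careful about is reading the definition of codesame with the intended bracketing, namely $\mbox{cof}(x) \wedge \mbox{cof}(y) \wedge \bigl({\rm samesupport}(x,y) \vee \mbox{oppsupport}(x,y)\bigr)$, so that the cofinality clauses apply to both cases of the disjunction.
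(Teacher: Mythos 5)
Your proof is correct and is exactly the intended argument: the paper states Lemma \ref{2.10} without proof precisely because it follows immediately from the definition of codesame together with Lemmas \ref{2.5}(iii), \ref{2.8}(iii) and \ref{2.9}, which is what you carry out. Your remark on the bracketing is sensible but harmless either way, since oppsupport already contains ${\rm cof}(x) \wedge {\rm cof}(y)$ as conjuncts, so both readings of the formula are equivalent.
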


Thus we have expressed when two cofinal elements `encode' the same point. It remains to express when this point is rational or irrational.

\vspace{.1in}

\noindent $\mbox{gauge}(x,y): \mbox{coterm}(x) \wedge \mbox{coterm}(y) \wedge (\exists z)(x^z = y) \wedge xy = yx$

$\hspace{1in} \wedge (\forall z)(\forall t)(xz = zx \wedge xt = tx \wedge yz = zy \wedge yt = ty \to zt = tz).$

\vspace{.1in}

This expresses that $x$ and $y$ are coterminal elements of the same parity (not needed, but makes things easier to understand), and they generate an abelian group $H = \langle x, y \rangle$, and the 
centralizer of $H$ is abelian. The reason for the choice of the word `gauge' is that such a pair $(f,g)$ provides some sort of `measurement' or scaling of the rational line. In fact for what follows, it is 
convenient to work with the induced action of members of $G$ on the real numbers. Indeed, we may view $G$ as a subgroup of Aut$({\mathbb R}, <)$, since any member $f$ of $G$ induces a member of 
Aut$({\mathbb R},<)$ by continuity ($f(x) = \mbox{ sup }{f(q):~ q \in {\mathbb Q},~ q \le x}$).

\begin{lemma}\label{2.11} For any $f, g \in G$, $G \models {\rm gauge}(f,g)$ if and only if for some embedding $\theta$ from $\mathbb Q$ to a dense subset $X$ of $\mathbb R$, and irrational number $\alpha$,
$\theta f \theta^{-1}(x) = x+1$, $\theta g \theta^{-1}(x) = x + \alpha$, for all $x \in X$.  \end{lemma}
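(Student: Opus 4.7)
I treat the two directions separately.

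\emph{Sufficiency} $(\Leftarrow)$. Assume $\theta$, $X$, $\alpha$ are as in the statement. I verify each clause of $\mathrm{gauge}(f,g)$. Because the translations $x\mapsto x+1$ and $x\mapsto x+\alpha$ act on $X$ without fixed points and with orbits that are cofinal and coinitial in the order on $X$, and $\theta$ carries $X$ back to $\mathbb{Q}$ order-isomorphically, the pullbacks $f$ and $g$ are coterminal of parity $+1$. Holland's observation supplies $z\in G$ with $f^z=g$ since both have the same orbital pattern. Translations commute, so $fg=gf$. For the centralizer clause, let $h,k\in G$ both commute with $f$ and with $g$; then $h'=\theta h\theta^{-1}$ and $k'=\theta k\theta^{-1}$ are order-preserving bijections of $X$ commuting with $+1$ and $+\alpha$. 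By monotonicity they extend uniquely to homeomorphisms of $\mathbb{R}$, and the extensions commute with $+t$ for every $t$ in the dense subgroup $\mathbb{Z}+\mathbb{Z}\alpha$ of $\mathbb{R}$; by continuity they commute with every real translation, so are themselves real translations and commute with each other. Therefore $hk=kh$.

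\emph{Necessity} $(\Rightarrow)$. Assume $G\models\mathrm{gauge}(f,g)$. From $\mathrm{coterm}$ and the conjugacy clause, $f$ and $g$ are coterminal of a common parity; WLOG $+1$. Extend each to $\bar f,\bar g\in\mathrm{Homeo}^+(\mathbb{R})$ by order-continuity; both are fixed-point free because $f,g$ are coterminal. Any fixed-point-free orientation-preserving homeomorphism of $\mathbb{R}$ is conjugate in $\mathrm{Homeo}^+(\mathbb{R})$ to the unit shift, so pick $\phi$ with $\phi\bar f\phi^{-1}(x)=x+1$ and set $g'':=\phi\bar g\phi^{-1}$. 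Since $\bar f$ and $\bar g$ commute, $g''$ commutes with $+1$; let $\alpha$ denote its Poincar\'e translation number, so that $g''$ descends to a circle homeomorphism of rotation number $\alpha\bmod 1$.

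\emph{Key claim.} $\alpha$ is irrational and $g''$ acts minimally on $\mathbb{R}/\mathbb{Z}$. Granting this, the classical Poincar\'e classification supplies $\psi\in\mathrm{Homeo}^+(\mathbb{R})$ commuting with $+1$ and satisfying $\psi g''\psi^{-1}(x)=x+\alpha$. Setting $\theta:=(\psi\phi)|_{\mathbb{Q}}$ and $X:=\theta(\mathbb{Q})$ yields an embedding of $\mathbb{Q}$ into a dense subset of $\mathbb{R}$ (density from $\psi\phi$ being a homeomorphism) realising the required conjugations.

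\emph{Proof of the key claim.} Both parts argue by contradiction from the abelian-centralizer hypothesis. If $\alpha=p/q\in\mathbb{Q}$, the element $k:=g^q f^{-p}\in G$ has translation number $0$, so its continuous extension has fixed points in $\mathbb{R}$; also $k\in C_G(\langle f,g\rangle)$. If $k=1$, then $\langle f,g\rangle$ is cyclic and $C_G(\langle f,g\rangle)=C_G(f)$, which is manifestly non-abelian (the induced action on the cyclically ordered quotient $\mathbb{Q}/\langle f\rangle$ supports non-commuting order-automorphisms that lift back to $G$). If $k\neq 1$, the nontrivial alternation of orbitals and fixed intervals of $k$ is permuted by $f$ and $g$ in a controlled way, and from this pattern one exhibits two elements of $C_G(\langle f,g\rangle)$ that fail to commute. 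If instead $\alpha$ is irrational but $g''$ is not minimal on $\mathbb{R}/\mathbb{Z}$, there is a unique $g''$-invariant Cantor set in $\mathbb{R}/\mathbb{Z}$; the complementary gaps, lifted back through $\phi$ to $\mathbb{Q}$, leave room for two non-commuting elements of $C_G(\langle f,g\rangle)$ supported on distinct unions of gaps, again contradicting the abelian-centralizer clause.

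\emph{Main obstacle.} The hardest step is the non-minimal case of the key claim: explicitly constructing two non-commuting centralizer elements from the Cantor-set structure while verifying that they act on $\mathbb{Q}$ rather than merely on $\mathbb{R}$. The rational-rotation-number case is similar in spirit but more combinatorial. Everything outside the key claim is routine.
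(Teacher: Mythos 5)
Your sufficiency direction is essentially the paper's argument and is fine: elements commuting with both translations are forced, via density of $\mathbb{Z}+\mathbb{Z}\alpha$ and continuity, to be translations themselves, hence to commute. The necessity direction, however, has a genuine gap. You reduce everything to the ``key claim'' and then, in both critical subcases, merely assert the existence of the non-commuting centralizer elements: in the rational-rotation-number case with $k \neq 1$ you say ``from this pattern one exhibits two elements \dots that fail to commute'' without any construction, and in the non-minimal irrational case you defer the construction entirely, labelling it the ``main obstacle.'' But producing those two non-commuting elements of $C_G(\langle f,g\rangle)$ (and checking they live in $G$, i.e.\ preserve the copy of $\mathbb{Q}$) \emph{is} the mathematical content of this direction; without it the proof does not exist. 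There is also a small unjustified step in the $k=1$ subcase: $C_G(\langle f,g\rangle)=C_G(f)$ requires first observing that $\langle f,g\rangle$ is cyclic (which needs torsion-freeness of $G$), and the non-abelianness of the centralizer of a coterminal element is again only asserted.

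For comparison, the paper avoids your case split entirely. It first proves the single statement that every orbit of $H=\langle f,g\rangle$ on $\mathbb{R}$ is dense: if not, take a maximal open interval $I$ in the complement of a non-dense orbit, let $J=\{(m,n): f^mg^nI=I\}\le\mathbb{Z}^2$ (which is cyclic, generated by some $(m,n)$ with both coordinates non-zero or equal to $(0,0)$), choose two non-commuting order-automorphisms $h_1,h_2$ of $I$ commuting with $f^mg^n\restriction I$, and spread them equivariantly over the orbit $\{f^rg^sI\}$ while fixing everything else. This one construction simultaneously handles your rational-rotation and Denjoy (non-minimal) cases, and it is explicit enough to check membership in $G$. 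Only after establishing minimality does the paper invoke the Poincar\'e rotation-number machinery to conjugate $g$ to an irrational translation. If you want to salvage your outline, you should replace the two hand-waved constructions by a single argument of this ``gap interval plus stabilizer subgroup of $\mathbb{Z}^2$'' type.
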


\begin{proof} If $f$ and $g$ are as stated, then as $\theta f \theta^{-1}$ and $\theta g \theta^{-1}$ are both translations, they commute, and hence so do $f$ and $g$. Let $h$ commute with both $f$ and $g$. 
Then $\theta h\theta^{-1}(x + n) = \theta h \theta^{-1}\theta f^n \theta^{-1}(x) = \theta hf^n\theta^{-1}(x) = \theta f^n h \theta^{-1}(x) = \theta h \theta^{-1} + n$, and similarly 
$\theta h \theta^{-1}(x+n\alpha) = \theta h \theta^{-1}(x) + n\alpha$. Hence $\theta h \theta^{-1}(m + n\alpha) = \theta h \theta^{-1}(0) + m + n\alpha$, so the set of real numbers $x$ such that
$\theta h \theta^{-1}(x) - x = \theta h \theta^{-1}(0)$ contains all of the form $m + n\alpha$. As $\alpha$ is irrational, this is a dense set, and as $\theta h \theta^{-1}(x) - x$ is continuous, it is 
contains all of ${\mathbb R}$. Hence $\theta h \theta^{-1}(x) = \theta h\theta^{-1}(0) + x$ for all $x \in X$. Thus $\theta h\theta^{-1}$ is a translation (by a member of $X$). 
So any two elements commuting with both $\theta f\theta^{-1}$ and $\theta g\theta^{-1}$ are translations, and so themselves commute. Hence any two members of $G$ commuting with both $f$ and $g$ commute.
This establishes what is wanted for gauge$(f,g)$.

Conversely, suppose that gauge$(f,g)$ holds. Then by replacing $f$ and $g$ by their inverses if necessary, we assume that they are positive. The main point is to show that every orbit of the group 
$H = \langle f, g \rangle$ generated by $f$ and $g$ is dense. Without loss of generality, $f$ is translation by $+1$. As remarked above, we work in ${\mathbb R}$ where necessary (and then restrict at the 
end of the argument to a countable dense subset). Suppose for a contradiction that some orbit $Y$ on ${\mathbb R}$ is not dense, and let $I$ be a maximal open interval of its complement. Consider the family 
${\mathcal I} = \{f^mg^nI: m, n \in {\mathbb Z}\}$, all of whose elements are also maximal open intervals of the complement of $Y$. Let $J = \{(m,n): f^mg^nI = I\}$. Since $f$ and $g$ commute, this is a 
subgroup of ${\mathbb Z}^2$, and since $I$ is not fixed by either $f$ or $g$, $(0,0)$ is the only member of $J$ having a zero co-ordinate. We now consider the least positive integer $m$ (if any) such that 
some $(m,n) \in J$. This has both co-ordinates non-zero, and it readily follows that $J$ is cyclic and is generated by $(m,n)$. Thus for some fixed $(m,n)$ (which may now also be $(0,0)$), $J$ consists of 
all multiples of $(m,n)$. Let $h_1$ and $h_2$ be non-commuting order-automorphisms of $I$ that each commute with $f^mg^n$ on $I$. (Note here that $f^mg^n$ is now a single fixed element of Aut$(I,<)$, so it 
is standard to find non-commuting members of its centralizer). Then by copying the action of $h_1, h_2$ on $I$ to all member $f^rg^sI$ of ${\mathcal I}$ (using $f^rg^s h_i (f^rg^s)^{-1}$ on $f^rg^sI$) and 
fixing all other points, we find still non-commuting order-automorphisms of ${\mathbb R}$ that both commute with $f$ and $g$, contrary to gauge$(f,g)$. The conclusion is that each orbit is dense. To 
conclude this argument, we appeal to some classical results. Since $f$ is translation by $1$, and $g$ commutes with it, they can be viewed as acting on the unit circle $C$ in ${\mathbb C}$ via the map 
$x \mapsto e^{2\pi ix}$, and then $f$ is the identity. More precisely, let ${\tilde f}$ and ${\tilde g}$ be defined on $C$ by letting ${\tilde f}(e^{2 \pi i x}) = e^{2 \pi i f(x)}$, and similarly for 
${\tilde g}$. Because $f$ is periodic with period $1$, ${\tilde f}$ is well-defined, and is actually the identity. But ${\tilde g}$ is also well-defined. For if $e^{2 \pi i x} = e^{2 \pi i y}$, then for 
some integer $n$, $y = x+n$, and hence $g(y) = g(x+n)$, and hence $e^{2 \pi i g(x)}  = e^{2 \pi i g(x+n)}  = e^{2 \pi i g(y)}$, which says that ${\tilde g}$ is well-defined. Since all orbits of 
$\langle f, g \rangle$ on $\mathbb R$ are  dense, all orbits of ${\tilde g}$ on the unit circle are dense. By the results given on pages 32-41 of \cite{Nitecki}, it then follows that the action of $g$ on 
the circle is conjugate to a rotation through $2\pi$ times an irrational angle which we also write as $\alpha$ (this is called the `rotation number' of the map, which in some sense is its `average' 
rotation). Lifting this back to $\mathbb R$, it follows that by replacing $g$ by a conjugate using a conjugacy that commutes with $f$, we may suppose that it is translation by an irrational $\alpha$.
\end{proof}
 
We can now distinguish rational and irrational.

\vspace{.1in}

\noindent $\mbox{rational}(x) : (\exists y)(\exists z)({\rm gauge}(y, z) \wedge$

\hspace{1in} $((\forall t)((\exists u)(t = x^u) \to (\exists u)(yu = uy \wedge zu = uz \wedge \mbox{codesame}(t, x^u))).$

\begin{lemma}\label{2.12} If $f \in G$, $G \models {\rm rational}(f)$ if and only if $f$ is a cofinal element having support $(-\infty, q)$ or $(q, \infty)$ for some rational $q$. \end{lemma}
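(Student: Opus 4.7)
The plan is to translate the formula rational$(x)$ into a condition on the action of the centralizer of a gauge on the endpoint $a$ of $f$'s support, and then compare the centralizer orbit of $a$ with the full $G$-orbit. First observe that rational$(f)$ forces $f$ to be cofinal: taking $t=f$ in the inner clause, codesame$(f,f^u)$ must hold for some $u$, which requires cof$(f)$. So $f$ has support $(-\infty,a)$ or $(a,\infty)$ for some real $a$, and the content of the formula is that there is a gauge $(y,z)$ for which $C\cdot a\supseteq G\cdot a$, where $C$ is the centralizer of $\langle y,z\rangle$ in $G$ and orbits are computed using the continuous extension of elements of $G$ to order-automorphisms of $\mathbb R$.

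Given gauge$(y,z)$, Lemma~\ref{2.11} yields an order-embedding $\theta:\mathbb Q\to X\subseteq\mathbb R$ with $X$ dense and an irrational $\alpha$ such that $\theta y\theta^{-1}$ and $\theta z\theta^{-1}$ are the translations $T_1$ and $T_\alpha$ on $X$. Running the argument of Lemma~\ref{2.11} one step further, I would verify that $C$ equals $\{\theta^{-1}T_\beta\theta:\beta\in T_X\}$, where $T_X=\{\beta\in\mathbb R:X+\beta=X\}$ is a subgroup of $\mathbb R$ that acts freely on $X$ and is therefore countable. Under these identifications, codesame$(f^v,f^u)$ is equivalent to the equality $\tilde v(a)=\tilde u(a)$ of continuously extended values, and the $C$-orbit of $a$ becomes $\tilde\theta^{-1}(\tilde\theta(a)+T_X)$.

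For the direction $(\Leftarrow)$, given $a=q\in\mathbb Q$, I would choose an irrational $\alpha$, let $X=\mathbb Q+\mathbb Z\alpha$ (a dense subgroup of $\mathbb R$), fix an order-isomorphism $\theta:\mathbb Q\to X$, and set $y=\theta^{-1}T_1\theta$, $z=\theta^{-1}T_\alpha\theta$; Lemma~\ref{2.11} gives gauge$(y,z)$. Because $X$ is a subgroup, $T_X=X$, so $C\cdot q=\theta^{-1}(\theta(q)+X)=\theta^{-1}(X)=\mathbb Q\supseteq G\cdot q$, and the inner clause of rational$(f)$ follows. For $(\Rightarrow)$, contrapositively assume $a$ is irrational and that some gauge $(y,z)$ witnesses rational$(f)$; then $C\cdot a$ is countable, whereas by a standard back-and-forth on $(\mathbb Q,\le)$ the group $G$ acts transitively on irrational cuts, so $G\cdot a$ is the full uncountable set of irrational reals. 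Picking $b\in G\cdot a\setminus C\cdot a$ and $v\in G$ with $\tilde v(a)=b$ produces a conjugate $f^v$ whose endpoint is not reached by any $f^u$ with $u\in C$, a contradiction.

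The main obstacle I expect is pinning down $C$ precisely: Lemma~\ref{2.11} only records that any two members of $C$ commute, whereas I need $C$ to consist exactly of the $\theta$-conjugates of the translations $T_\beta$ preserving $X$, in order to bound $|C|\le\aleph_0$ and compute orbits explicitly. I expect this to fall out directly from the proof of Lemma~\ref{2.11}, which already shows that $\theta h\theta^{-1}$ is a translation for every $h\in C$; the requirement that $h\in G=\mathrm{Aut}(\mathbb Q,\le)$ then forces the translation amount $\beta$ to lie in $T_X$.
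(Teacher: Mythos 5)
Your proposal is correct and follows essentially the same route as the paper's proof: both directions rest on identifying the centralizer of a gauge pair with a countable group of translations of the dense set $X$ supplied by Lemma~\ref{2.11}, then using countability of that centralizer's orbit against the uncountability of the $G$-orbit of an irrational cut for one implication, and an explicitly transitive translation group (your choice $X=\mathbb{Q}+\mathbb{Z}\alpha$ makes precise what the paper leaves implicit) for the other. No substantive divergence or gap.
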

 
\begin{proof} First suppose that the formula holds, and let $g$ and $h$ be witnesses for $y$ and $z$. By Lemmas \ref{2.8}(iii) and \ref{2.11}, we may view $G$ as acting on some countable dense linear order 
$X$ without endpoints, so that and $g$ and $h$ are translations by 1 and $\alpha$ respectively, for some irrational $\alpha$, and $f$ has support $(-\infty, q)$ or $(q, \infty)$ for some real $q$, suppose 
$(q,\infty)$ for ease. The members of $G$ which are conjugate to $f$ are precisely the cofinal elements of the same parity having support $(r, \infty)$ where $q \in X \leftrightarrow r \in X$. Let $k$ be 
any value for $t$ in the formula. Thus $f$ and $k$ are conjugate, so $k$ is a cofinal element having support $(r, \infty)$ for some $r$, of the same parity as $f$ and $q \in X \leftrightarrow r \in X$. Here 
$r$ may be taken as any point such that $q \in X \leftrightarrow r \in X$. Now as shown above, if $l$ is a witness for $u$ on its second occurrence, then $l$ must be a translation by a member of $X$, and 
since $k$ and $f^l$ code the same point, it must take $q$ to $r$. Since $X$ is countable, there can be only countably many such $l$, and hence $q$ must lie in $X$ (as otherwise there would be uncountably 
many values of $r$ available). When $X$ is `reidentified' with ${\mathbb Q}$, this tells us that $q$ is rational.

Conversely, suppose that $f$ is a cof element having support $(q, \infty)$ where $q$ is rational. By the above, there is a pair $(g, h)$ satisfying gauge, and by Lemma \ref{2.11}, relabelling ${\mathbb Q}$, 
we may regard it as $X$, where $q \in X$ and for some irrational $\alpha, g(x) = x+1$ and $h(x) = x + \alpha$. Now any element $k$ of $G$ which is conjugate to $f$ is a cofinal element having support 
$(r, \infty)$ for some $r \in X$. There is a translation of $X$ which takes $q$ to $r$, and this conjugates $f$ to some cofinal element having support $(r, \infty)$, which codes the same point as $k$. 
\end{proof}

\begin{theorem}\label{2.13} The set $\mathbb Q$, and action of $G$, the automorphism group of $({\mathbb Q}, \le)$, on $\mathbb Q$, can be interpreted in $(G, \circ)$. \end{theorem}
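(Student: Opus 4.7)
The plan is to assemble the interpretation directly from the definable predicates built up in the previous lemmas. I will take the definable set to be $R = \{x \in G : G \models \text{rational}(x)\}$, the definable equivalence relation on $R$ to be $\text{codesame}(x,y)$, and the definable action of $G$ on $R/{\sim}$ to be conjugation: $g \cdot [f] = [gfg^{-1}] = [f^{g}]$ (in the paper's convention $f^g = gfg^{-1}$, since actions are on the left).

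First I would verify that $R/{\sim}$ is in canonical bijection with $\mathbb{Q}$. By Lemma \ref{2.12}, each $f \in R$ is a cofinal bump with support $(-\infty, q)$ or $(q,\infty)$ for some rational $q$, and conversely every such $q$ arises this way. By Lemma \ref{2.10}, $\text{codesame}(f,f')$ holds precisely when $f$ and $f'$ code the same endpoint, so the map $\Phi \colon R/{\sim} \to \mathbb{Q}$ sending the class of such an $f$ to its associated rational $q$ is a well-defined bijection. Reflexivity, symmetry and transitivity of codesame on $R$ are immediate from this description (or can be checked directly from Lemma \ref{2.10}).

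Next I would check that conjugation induces a well-defined action on $R/{\sim}$ that corresponds, under $\Phi$, to the natural action of $G$ on $\mathbb{Q}$. If $f \in R$ has support, say, $(q,\infty)$, then for $g \in G$ the conjugate $gfg^{-1}$ is also a cofinal bump, with support $g\bigl((q,\infty)\bigr) = (g(q), \infty)$; since $g(q)$ is rational, $gfg^{-1} \in R$, and its codesame class corresponds to $g(q)$ under $\Phi$. If $f, f'$ are codesame in $R$, then $gfg^{-1}$ and $gf'g^{-1}$ share the same endpoint of support and are therefore codesame, so the action descends to $R/{\sim}$. The monoid/group identities ($1 \cdot [f] = [f]$, $(gh)\cdot [f] = g \cdot (h \cdot [f])$) then follow from the analogous identities for conjugation, and by construction $\Phi(g \cdot [f]) = g(\Phi([f]))$.

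Finally I would note that all three pieces of data are definable by first-order formulas in the language of $(G, \circ)$: $R$ by $\text{rational}(x)$, the equivalence by $\text{codesame}(x,y)$, and the action by the formula $\varphi(g, x, y) :\equiv \text{codesame}(gxg^{-1}, y) \wedge \text{rational}(x) \wedge \text{rational}(y)$ (with $g^{-1}$ eliminated via $\exists z(gz = 1 \wedge \ldots)$). Assembling these, we obtain the required interpretation. The one non-obvious point is not really an obstacle at this stage, since the heavy lifting was done in Lemma \ref{2.11} and Lemma \ref{2.12}: it is simply checking that $\text{codesame}$ is genuinely an equivalence on $R$ and that conjugation respects it, both of which are transparent from the geometric description of supports.
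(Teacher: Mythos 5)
Your proposal is correct and follows essentially the same route as the paper: the paper likewise takes the rational-satisfying elements modulo codesame as the domain and defines $\mathrm{act}(x,y,z)$ as $\mathrm{rational}(y) \wedge \mathrm{rational}(z) \wedge \mathrm{codesame}(y^x,z)$, verifying correctness via the support description from Lemma \ref{2.12} and the fact that $\mathrm{supp}(g^f)=f(\mathrm{supp}(g))$. Your additional checks that codesame is an equivalence relation and that conjugation descends to classes are sound and only make explicit what the paper leaves implicit.
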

 
\begin{proof} We may interpret rational numbers as members of $G$ which fulfil the formula `rational', two of which are identified if they fulfil the formula `codesame'. We now let $G$ act on 
equivalence classes of rational cofinal elements under `codesame' by conjugacy, which gives rise to the desired formula act$(x,y,z)$ which says that $y$ and $z$ are rational cofinal elements, and 
codesame($y^x,z)$:

\vspace{.1in}

\noindent $\mbox{act}(x, y, z): \mbox{rational}(y) \wedge \mbox{rational} (z) \wedge \mbox{codesame}(y^x, z).$

\vspace{.1in}

To see that this is correct, suppose that $G \models \mbox{act}(f, g, h)$. By Lemma \ref{2.12}, $g$ and $h$ are cofinal elements of $G$ such that for some rationals $q$ and $r$, $g$ has support 
$(-\infty, q)$ or $(q, \infty)$ and $h$ has support $(-\infty, r)$ or $(r, \infty)$. Since $G \models \mbox{codesame}(g^f, h)$, $g^f$ and $h$ have equal or opposite supports. Since 
supp($g^f) = f(\mbox{supp}(g))$, $f(q) = r$. Conversely, if $f(q) = r$ we can choose cofinal elements $g$ and $h$ having supports $(q, \infty)$ and $(r, \infty)$ respectively, and then $g^f$ has support 
$(r, \infty)$, so $G \models \mbox{codesame}(g^f, h)$.
\end{proof}

We conclude this section by remarking that we can also recover the betweenness relation on this interpretation of $\mathbb Q$. It isn't possible to recover the ordering, since 
${\rm Aut}({\mathbb Q}, <) \cong {\rm Aut}({\mathbb Q}, >)$. The next best thing is to recover `linear betweenness', which is the ternary relation defined by $B(x, y, z)$ if $x \le y \le z$ or 
$z \le y \le x$. Using the formula `act' introduced above, the formula `between'; given as follows:

\vspace{.1in}

\noindent $\mbox{between}(x, y, z): x = y \vee y = z \vee (x \neq y \wedge y \neq z \wedge (\exists t)(\mbox{act}(t, x, y) \wedge \mbox{act}(t, y, z))$

\vspace{.1in}

\noindent precisely expresses linear betweenness on $\mathbb Q$. More precisely, $G \models \mbox{between}(g, h, k)$ if and only if $g, h, k$ encode rationals $q, r, s$ respectively such that 
$q \le r \le s$ or $s \le r \le q$. For instance, if $q < r < s$, there is a positive $l$ witnessing $t$ in the formula which takes $q$ to $r$ and $r$ to $s$, and if $q > r > s$, $l$ can be taken to be 
negative. Conversely, if $l$ exists which takes $q$ to $r$ and $r$ to $s$, then as it is order-preserving, $q < r \Leftrightarrow r < s$.

\section{Proofs of the main results}

In this section we prove the results promised in the introduction, beginning with two small but significant technical points concerning what we are able to express in $M$. 

\begin{lemma}\label{3.1} For any $f \in M$ and $g \in G$, $M \models gf = f$ if and only if the image of $f$ is disjoint from the support of $g$. \end{lemma}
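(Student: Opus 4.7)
The statement is essentially a direct unpacking of the definition of support, so the plan is to verify both directions by pointwise evaluation. The only conceptual content is to recall that $\mathrm{supp}(g) = \{y \in \mathbb{Q} : g(y) \neq y\}$ and that $gf = f$ in the monoid $M$ means $g(f(x)) = f(x)$ for every $x \in \mathbb{Q}$.

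For the forward direction, I would assume $gf = f$ and pick an arbitrary element of $\mathrm{im}(f)$, say $y = f(x)$. Then $g(y) = g(f(x)) = f(x) = y$, so $y \notin \mathrm{supp}(g)$. Since $y$ was arbitrary, $\mathrm{im}(f) \cap \mathrm{supp}(g) = \emptyset$.

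For the reverse direction, I would assume $\mathrm{im}(f) \cap \mathrm{supp}(g) = \emptyset$ and compute $gf(x)$ for an arbitrary $x \in \mathbb{Q}$. Since $f(x) \in \mathrm{im}(f)$, it lies outside $\mathrm{supp}(g)$, hence is fixed by $g$, giving $g(f(x)) = f(x)$. As this holds for all $x$, we have $gf = f$ in $M$.

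There is no real obstacle here: the lemma is purely a restatement, and the proof amounts to applying the definition of support at each point of the image. The significance of the lemma (to be used in subsequent arguments) lies not in its proof but in the fact that the condition ``image of $f$ disjoint from support of $g$'' is now first-order expressible in $M$ using only the equation $gf = f$, which is why it is flagged as a ``small but significant technical point.''
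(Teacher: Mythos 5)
Your proof is correct and follows exactly the same argument as the paper: both directions are established by the same pointwise evaluation, noting that each point of $\mathrm{im}(f)$ is fixed by $g$ precisely when $gf=f$. Nothing further is needed.
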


\begin{proof} First suppose that $gf = f$. If $x$ lies in the image of $f$, write $x = fy$. Then $x = fy = gfy = gx$ and so $x$ does not lie in the support of $g$. Hence 
${\rm im}(f) \cap {\rm supp}(g) = \emptyset$.

Conversely, if ${\rm im}(f) \cap {\rm supp}(g) = \emptyset$ then for any $x$, $fx \in {\rm im}(f)$ and so is fixed by $g$, so $gf(x) = f(x)$.   \end{proof}

Now consider the following formula:

\vspace{.1in}

\noindent $\mbox{gap}(x, y) : {\rm bump}(y) \wedge yx = x \wedge (\forall z)({\rm bump}(z) \wedge \neg {\rm disj}(y, z) \wedge zx = x \to {\rm cont}(z, y))$.

\vspace{.1in}

We remark that as shown in Lemma 3.2 of \cite{Edith}, each of $M$ and $G$ is a definable subset of $E$; also $G$ is a definable subset of $M$ (as its set of invertible elements). Hence the results of the 
previous section can be immediately lifted to give interpretations in $M$ or $E$ as the case may be, just by relativizing the relevant formulae to $G$. So in this formula, for instance, $x$ is meant to 
range over $M$, whereas $y$ and $z$ range just over $G$, because they are variables occurring in formulae introduced in section 2. So strictly speaking, the formula should say
`$y$ is invertible, and ..., and for all $z$, if $z$ is invertible then ...'. In what follows we take this as read.

\begin{lemma}\label{3.2} For any $f \in M$ and $g \in G$, $M \models {\rm gap}(f, g)$ if and only if $g$ is a bump whose support is a maximal convex subset of ${\mathbb Q} \setminus {\rm im}(f)$. \end{lemma}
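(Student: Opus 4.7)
The plan is to unpack the three conjuncts of $\mathrm{gap}(f,g)$ using results already proved. The clause $\mathrm{bump}(y)$ says exactly that $g$ is a bump (Lemma \ref{2.3}); by Lemma \ref{3.1} the clause $yx = x$ translates as $\mathrm{supp}(g) \cap \mathrm{im}(f) = \emptyset$, i.e.\ $\mathrm{supp}(g) \subseteq \mathbb{Q}\setminus\mathrm{im}(f)$; and by Lemmas \ref{2.5}(i),(ii) together with Lemma \ref{3.1}, the universal clause becomes: every bump $z$ whose support is contained in $\mathbb{Q}\setminus\mathrm{im}(f)$ and meets $\mathrm{supp}(g)$ satisfies $\mathrm{supp}(z) \subseteq \mathrm{supp}(g)$. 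So I reduce the statement to a purely combinatorial fact about the convex subset $\mathrm{supp}(g)$ of $\mathbb{Q}\setminus\mathrm{im}(f)$.

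For the backward direction, assume $\mathrm{supp}(g)$ is a maximal convex subset of $\mathbb{Q}\setminus\mathrm{im}(f)$. The first two conjuncts hold trivially. For the third, take any bump $z$ with $\mathrm{supp}(z) \subseteq \mathbb{Q}\setminus\mathrm{im}(f)$ and $\mathrm{supp}(z) \cap \mathrm{supp}(g) \neq \emptyset$. Since the union of two convex subsets of a linear order that meet is itself convex, $\mathrm{supp}(g) \cup \mathrm{supp}(z)$ is a convex subset of $\mathbb{Q}\setminus\mathrm{im}(f)$ containing $\mathrm{supp}(g)$, so by maximality it equals $\mathrm{supp}(g)$, giving $\mathrm{supp}(z) \subseteq \mathrm{supp}(g)$.

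For the forward direction, suppose the three conjuncts hold but $\mathrm{supp}(g)$ is \emph{not} maximal. Write $\mathrm{supp}(g) = (a,b)\cap\mathbb{Q}$ (with $a,b\in\mathbb{R}\cup\{\pm\infty\}$) and let $C = (\alpha,\beta)\cap\mathbb{Q}$ be a convex subset of $\mathbb{Q}\setminus\mathrm{im}(f)$ with $\mathrm{supp}(g) \subsetneq C$. Then $\alpha < a$ or $\beta > b$; say the first. Pick a real $\alpha'$ with $\alpha < \alpha' < a$. Then $(\alpha',b)\cap\mathbb{Q}$ is a non-empty convex subset of $\mathbb{Q}\setminus\mathrm{im}(f)$ that strictly contains $\mathrm{supp}(g)$. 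By homogeneity of $(\mathbb{Q},\le)$, there is a bump $z \in G$ with $\mathrm{supp}(z) = (\alpha',b)\cap\mathbb{Q}$. Then $zf = f$ (Lemma \ref{3.1}) and $\neg\mathrm{disj}(g,z)$ (since $\mathrm{supp}(g) \subseteq \mathrm{supp}(z)$), yet $\mathrm{supp}(z) \not\subseteq \mathrm{supp}(g)$, contradicting $\mathrm{cont}(z,g)$. The case $\beta > b$ is symmetric.

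The only mildly delicate point is producing the bump $z$ with prescribed open-interval support inside $C$; this is where I use that $\mathrm{supp}(z)$ needs to be a convex subset of $\mathbb{Q}$ whose real-line closure has no rational endpoint lying in $\mathrm{im}(f)$. The choice $(\alpha',b)\cap\mathbb{Q}$ with $\alpha'$ strictly between $\alpha$ and $a$ side-steps this: whatever kind of endpoint $\alpha$ or $b$ was (irrational, infinite, or rational-but-in-$\mathrm{im}(f)$), the new interval sits strictly inside $C$, so its closure poses no conflict with $\mathrm{im}(f)$, and the standard construction of a single-orbital automorphism of a dense sub-interval of $(\mathbb{Q},\le)$ produces the required bump.
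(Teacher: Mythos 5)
Your proof is correct and takes essentially the same route as the paper's: the backward direction is the paper's maximality argument verbatim, and your contrapositive forward direction (manufacture a bump with strictly larger support still inside ${\mathbb Q} \setminus {\rm im}(f)$ and contradict the third conjunct via Lemma \ref{3.1}) is just a rephrasing of the paper's direct version, which takes $h$ to be a bump supported on the whole maximal convex subset containing ${\rm supp}(g)$. The one caveat, which you share with the paper, is the degenerate case where a maximal convex subset of ${\mathbb Q} \setminus {\rm im}(f)$ has a rational endpoint belonging to it (so is not of the form $(\alpha,\beta)\cap{\mathbb Q}$ and is not the support of any bump); both your parametrization $C=(\alpha,\beta)\cap{\mathbb Q}$ and the paper's choice of $h$ silently assume this does not occur.
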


\begin{proof} First suppose that $M \models {\rm gap}(f, g)$. Then $gf = f$, so by Lemma \ref{3.1}, ${\rm supp}(g) \subseteq {\mathbb Q} \setminus {\rm im}(f)$. If we let $h$ be a bump whose support is equal
to the maximal convex subset of ${\mathbb Q} \setminus {\rm im}(f)$ containing ${\rm supp}(g)$, then also $hf = f$ (by Lemma \ref{3.1} again), so the supports of $g$ and $h$ must be equal, hence giving
the desired maximality. 

Conversely, if $g$ is a bump whose support is a maximal convex subset of ${\mathbb Q} \setminus {\rm im}(f)$, then the first part of $\mbox{gap}(f, g)$ is satisfied since by Lemma \ref{3.1}, $gf = f$. 
Maximality ensures that the remaining part is also satisfied, since if $h$ is a bump whose support intersects that of $g$, and such that $hf = f$, ${\rm supp}(h)$ is disjoint from the image of $f$ by
Lemma \ref{3.1}, and it must be contained in the same maximal convex subset of ${\mathbb Q} \setminus {\rm im}(f)$ so is equal to supp($g$).
 \end{proof} 

\vspace{.1in}

For the final arguments, we need to recall key definitions from \cite{Edith}. Let $\mathbb{Q}_2$ stand for the `2-coloured rationals', that is, the ordered set of rational numbers, with colours assigned to 
all points, which we describe as `red' and `blue', which each arise densely. We denote by~$\Gamma$ the family of all $f \in M$ such that~$\mathbb Q$ may be written as the disjoint union
$\bigcup \{A_q\colon q \in \mathbb{Q}_2\}$ of convex subsets~$A_q$ of~$\mathbb Q$ such that $q < r \Rightarrow A_q < A_r$, each~$A_q$ is isomorphic to~$\mathbb Q$, and if~$q$ is a red point of 
$\mathbb{Q}_2$ then~ $|A_q \cap \mathop{\rm im}(f)| = 1$, and if~$q$ is a blue point of $\mathbb{Q}_2$ then $A_q \cap \mathop{\rm im}(f) = \emptyset$. The intuition is that the points of the image 
of~$f$ are spread out as much as they possibly can be. We also require similar families written as $\Gamma^-$, $\Gamma^+$, and $\Gamma^\pm$, which are defined similarly from $\{-\infty\} \cup {\mathbb Q_2}$,
${\mathbb Q_2} \cup \{\infty\}$, and ${\mathbb Q_2} \cup \{\pm \infty\}$ respectively, where all infinite points have colour blue. These are needed to deal with members of $M$ whose support is bounded below 
but not above, above but not below, or above and below respectively. We have various versions of the formula `act', introduced in the proof of Theorem \ref{2.13}, to deal with the action of monoid elements 
under different hypotheses, $\mbox{act}_1$, $\mbox{act}_2$, $\mbox{act}_3$, and $\mbox{act}_4$.

\vspace{.1in}

\noindent $\mbox{act}_1(x, y, z) : \mbox{ rational}(y) \wedge \mbox{ rational} (z) \wedge \mbox{ conj}(y, z) \wedge$

$(\exists z')(\mbox{conj}(z, z') \wedge \mbox{cont}(z',z) \wedge xy = z'x \wedge \exists t(\mbox{gap}(x, t) \wedge \mbox{adj}(t,z') \wedge \mbox{union}(t,z',z)))$.

\begin{lemma}\label{3.3} If $M \models {\rm act}_1(f, g, h)$ and $g$, $h$ encode rationals $q$ and $r$ respectively, then $f(q) = r$. Conversely, if 
$f \in \Gamma \cup \Gamma^- \cup \Gamma^+ \cup \Gamma^\pm$ and $f(q) = r$ where $q$ and $r$ are rationals encoded by conjugate $g, h \in G$, then $M \models {\rm act}_1(f, g, h)$. \end{lemma}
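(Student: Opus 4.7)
The plan is to unfold each conjunct of $\mathrm{act}_1(f,g,h)$ using the support/bump/cofinal calculus of Lemmas~\ref{2.5}--\ref{2.10} together with Lemma~\ref{3.1} (for the equation $fg=z'f$) and Lemma~\ref{3.2} (for $\mathrm{gap}$), and then to argue the two directions separately. In both directions, by Lemma~\ref{2.12} and the $\mathrm{conj}(y,z)$ clause, I may take $g$ and $h$ to be positive cofinal bumps with supports $(q,\infty)$ and $(r,\infty)$ respectively, the other parity/orientation cases being symmetric.

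\textbf{Forward direction.} Given witnesses $z'$ and $t$, the clauses $\mathrm{conj}(h,z')$ and $\mathrm{cont}(z',h)$ force $z'$ to be a positive cofinal bump with $\mathrm{supp}(z')=(s,\infty)$ for some $s\geq r$ of the same endpoint-type as $h$. Combining $\mathrm{gap}(f,t)$, $\mathrm{adj}(t,z')$, and $\mathrm{union}(t,z',h)$ pins down $\mathrm{supp}(t)=(r,s)\cap\mathbb{Q}$, a maximal convex subset of $\mathbb{Q}\setminus\mathrm{im}(f)$; Lemma~\ref{3.1} applied to $fg=z'f$ gives $f(p)\leq s$ for $p\leq q$ and $f(p)>s$ for $p>q$. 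Since $\mathrm{supp}(t)$ is open (being the support of a bump) and has $r\in\mathbb{Q}$ as a boundary point, maximality forces $r\in\mathrm{im}(f)$, say $r=f(p^*)$, which combined with $r\leq s$ and the strict inequality at $p>q$ gives $p^*\leq q$. If $p^*<q$ then $r<f(q)\leq s$ and $f(q)\in\mathrm{im}(f)\cap(r,s]$: the sub-case $f(q)<s$ contradicts $(r,s)\cap\mathrm{im}(f)=\emptyset$, while $f(q)=s$ would make $r=f(p^*)$ and $s=f(q)$ consecutive points of $\mathrm{im}(f)$, contradicting that $\mathrm{im}(f)$ is order-isomorphic to $\mathbb{Q}$ and so densely ordered. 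Hence $p^*=q$ and $f(q)=r$.

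\textbf{Converse direction.} Suppose $f\in\Gamma\cup\Gamma^-\cup\Gamma^+\cup\Gamma^\pm$ with $f(q)=r$, and $g,h$ are conjugate rational encodings of $q,r$. The definition of the relevant $\Gamma$-family provides, just above the image point $r=f(q)$, a canonical maximal convex subset $(r,s)\cap\mathbb{Q}$ of $\mathbb{Q}\setminus\mathrm{im}(f)$ whose upper endpoint $s$ is of the type required so that a positive cofinal bump with support $(s,\infty)$ can lie in the $G$-conjugacy class of $h$. Take $t$ to be any bump with support $(r,s)$. The equation $z'\circ f=f\circ g$ prescribes $z'$ on $\mathrm{im}(f)\cap(s,\infty)=f((q,\infty))$; a standard Cantor back-and-forth extends this partial order-preserving map to a positive cofinal bump $z'$ on $(s,\infty)$ in the conjugacy class of $h$. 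Each remaining conjunct then checks off directly from the construction: $\mathrm{conj}(h,z')$ and $\mathrm{cont}(z',h)$ from the choice of $z'$; $fg=z'f$ on all of $\mathbb{Q}$ (the two sides agree on $(-\infty,q]$ since $g$ fixes it and $f((-\infty,q])\subseteq(-\infty,s]$ is fixed by $z'$, and they agree on $(q,\infty)$ by construction of $z'$); $\mathrm{gap}(f,t)$ from the choice of $\mathrm{supp}(t)$; $\mathrm{adj}(t,z')$ from the shared endpoint $s$; and $\mathrm{union}(t,z',h)$ since the least convex open interval containing $(r,s)\cup(s,\infty)$ is $(r,\infty)=\mathrm{supp}(h)$.

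\textbf{Main obstacle.} The delicate point is the simultaneous existence of a witness $z'$ that is $G$-conjugate to $h$ and realises the forced partial action $fgf^{-1}$ on $f((q,\infty))$. The role of the four-family split $\Gamma,\Gamma^-,\Gamma^+,\Gamma^\pm$ is exactly to arrange, in each of the four regimes for $\mathrm{im}(f)$ (cofinal in $\mathbb{Q}$; bounded only below; bounded only above; bounded on both sides), that the endpoint $s$ of the maximal convex gap of $\mathbb{Q}\setminus\mathrm{im}(f)$ just above $r$ has the correct endpoint-type for this extension to succeed inside the $G$-conjugacy class of $h$. Once that structural fact about the $\Gamma$-families is granted, the back-and-forth extension and the remaining conjunct-by-conjunct verifications are routine bookkeeping.
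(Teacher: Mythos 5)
Your forward direction is correct and is essentially the paper's argument in mirror image: from $fg=z'f$ and injectivity of $f$ you locate $\mathrm{im}(f)$ relative to $\mathrm{supp}(z')$, and gap/adj/union then force the endpoint of $\mathrm{supp}(h)$ to be $f(q)$. (Your citation of Lemma~\ref{3.1} for the step ``$f(p)\le s$ for $p\le q$, $f(p)>s$ for $p>q$'' is slightly off --- that lemma concerns equations of the form $gf=f$ --- but the direct computation $z'f(p)=fg(p)$ plus injectivity, which is what the paper writes out, is exactly what you are using.)

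The converse direction has a genuine gap, and it sits exactly where you parked the ``main obstacle''. First, extending the forced partial map $fgf^{-1}$ from $\mathrm{im}(f)\cap(s,\infty)$ to an order-automorphism of $(s,\infty)\cap\mathbb{Q}$ is not ``a standard Cantor back-and-forth'': a partial order-isomorphism defined on an infinite subset of $\mathbb{Q}$ extends to an automorphism only if the complementary intervals correspond isomorphically, and arranging this is precisely the job of the definition of $\Gamma$ (one image point per red block of a $\mathbb{Q}_2$-decomposition, blue blocks missing the image entirely). The paper's proof uses this concretely: $fgf^{-1}$ permutes image points, hence induces an automorphism of the relevant copy of $\mathbb{Q}_2$, which is then realised on the interval. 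Your proposal never touches the red/blue structure, so the one place where the hypothesis $f\in\Gamma\cup\Gamma^-\cup\Gamma^+\cup\Gamma^\pm$ does real work is left unargued. Second, the ``structural fact'' you grant --- that the gap endpoint $s$ has the right type for a bump supported on $(s,\infty)$ to be $G$-conjugate to $h$ --- cannot simply be granted, and indeed is false as stated: in the $\Gamma$-decomposition $s$ is the supremum of the red block containing $r$, hence irrational, whereas $\mathrm{supp}(h)=(r,\infty)$ has rational endpoint, and two one-orbital cofinal elements whose support endpoints differ in rationality are not conjugate in $G$ (their fixed-point sets are non-isomorphic, cf.\ the proof of Lemma~\ref{2.12}). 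So the clause $\mathrm{conj}(z,z')$ requires a genuine argument rather than a deferral; note that the paper's own write-up verifies $fg=h'f$ and the orbital structure of $h'$ but is also silent on this conjugacy clause, so this is the step that most needs scrutiny, not the one that can be waved through.
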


\begin{proof} First suppose that $M \models \mbox{act}_1(f, g, h)$. Since ${\rm rational}(g)$ and ${\rm rational}(h)$, $g$, $h$ encode rationals $q$ and $r$ say. Since they are conjugate, for ease we suppose 
that they have supports $(-\infty, q)$ and $(-\infty, r)$ respectively (with a similar argument if their supports are $(q, \infty)$, $(r, \infty)$). Let $h'$ be a witness for $z'$, which is also conjugate 
to $h$, so has support $(-\infty, r')$. Since $M \models \mbox{cont}(h', h)$, $r' \le r$. Now $fg = h'f$. Since $g$ has support $(-\infty, q)$, $g(q) = q$. Hence $h'f(q) = fg(q) = f(q)$, so $f(q)$ does not 
lie in the support of $h'$, and hence $f(q) \ge r'$. Also, if $a < q$, $g(a) \neq a$, and since $f$ is injective, $h'f(a) = fg(a) \neq f(a)$, which tells us that $f(a)$ lies in the support of $h'$, so 
$f(a) < r'$. Let $k$ be a witness for $t$. Since $M \models {\rm adj}(k, h')$, $r'$ equals the infimum of the support of $k$, and as this is non-empty and disjoint from the image of $f$, it follows that 
$r' < f(q)$. Since $M \models {\rm gap}(f, k)$, $(r', f(q))$ equals the support of $k$. Since $M \models {\rm union}(k, h', h)$ we deduce that ${\rm supp}(h) = (-\infty, f(q))$. Hence $f(q) = r$.

Conversely, let $f \in \Gamma$, and rational and conjugate $g, h \in G$ encode $q$ and $r$ respectively. We have to show that $M \models {\rm act}_1(f, g, h)$. Without loss of generality suppose that $g$ 
has support $(-\infty, q)$, from which it follows that $h$ has support $(-\infty, r)$, since they are conjugate. Note that as $f \in \Gamma$, sup$f(-\infty, q) = r' < r$. We shall show how to choose 
suitable witnesses $h'$ for $z'$, and $k$ for $t$. In fact, $k$ can be any bump having support $(r', r)$. To see how to find a suitable $h'$, consider the partial map $fgf^{-1}$ restricted to 
${\rm im}(f) \cap (-\infty, r')$, on which it is well-defined and order-preserving. We observe that 
$fgf^{-1}[{\rm im} f \cap (-\infty, r')] = fg(-\infty, q) = f(-\infty, q) = {\rm im}(f) \cap (-\infty, r')$. By definition of $\Gamma$, and since all points of ${\rm im}(f)$ lie in red intervals, $fgf^{-1}$ 
corresponds to an automorphism of the copy of ${\mathbb Q}_2$ to the left of $r'$, and this automorphism in turn corresponds to an automorphism $h'$ of $(-\infty, r')$ extending $fgf^{-1}$ (i.e. mapping red 
intervals to red intervals and blue to blue); $h'$ is then extended to the whole of $\mathbb Q$ by letting it fix $[r', \infty)$ pointwise. Then we can see that $fg = h'f$ holds. If $x < q$ then 
$f(x) \in {\rm im} f \cap (-\infty, r')$ so $fg(x) = fgf^{-1}(f(x)) = h'f(x)$, and if $x \ge q$, then $g(x) = x$, so
$fg(x) = f(x) = h'f(x)$ since $f(x) \ge r$ which is fixed by $h'$. Furthermore, the fact that $h'$ has a single non-trivial orbital which is $(-\infty, r')$, on which it has the same parity as $g$,
follows easily. It follows that $M \models {\rm act}_1(f, g, h)$. 

Now we remark on how the argument is modified if $f \in \Gamma^-$ (or similarly the other sets). The choices of $r'$ and $k$ are as before. The partial map $fgf^{-1}$ is again considered on 
${\rm im}(f) \cap (-\infty, r')$, but this time, this set is bounded below. This makes no essential difference, except that $h'$ will also fix points of $(-\infty, \inf {\rm im}(f))$ pointwise.  \end{proof} 

\begin{theorem}\label{3.4} The ordered set $({\mathbb Q}, <)$ and the action of its monoid of embeddings $M$ on $\mathbb Q$ are first order interpretable in $(M, \circ)$.  
\end{theorem}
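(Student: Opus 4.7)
The rationals themselves are immediately interpreted in $(M, \circ)$: since $G$ is first-order definable in $M$ as its set of invertible elements (as noted in the text, following Lemma~3.2 of \cite{Edith}), the formulae of Section~2 relativise from $G$ to $M$, and Theorem~\ref{2.13} interprets $\mathbb{Q}$ as equivalence classes of rational cofinal elements under codesame. The work lies in defining the action of an \emph{arbitrary} $f \in M$ on these codes. Lemma~\ref{3.3} already handles this for $f \in \Gamma \cup \Gamma^- \cup \Gamma^+ \cup \Gamma^\pm$, so my plan is to reduce the general case to this one by a decomposition of $f$.

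The plan runs in three steps. \textbf{(i)} Write a first-order formula $\Gamma^*(x)$ in $M$ capturing membership in $\Gamma \cup \Gamma^- \cup \Gamma^+ \cup \Gamma^\pm$: the gap formula of Lemma~\ref{3.2} selects the bumps whose supports are the maximal convex components of $\mathbb{Q} \setminus \mbox{im}(x)$, and a conjunction of further Section~2 formulae asserts that these gaps are densely ordered, pairwise apart, and separated by singleton image points in the alternating red/blue pattern of $\mathbb{Q}_2$; this follows the lines of Lemma~3.2 of \cite{Edith}. \textbf{(ii)} Establish a decomposition lemma: every $f \in M$ may be written as $f = g_1 \circ k \circ g_2$, with $g_1, g_2 \in G$ and $k \in \Gamma \cup \Gamma^- \cup \Gamma^+ \cup \Gamma^\pm$ of the subscript type prescribed by the boundedness of $\mbox{supp}(f)$. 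Intuitively, $g_2$ rearranges the domain of $f$ onto a spread-out copy, $k$ reproduces the combinatorics of $f$ across that copy, and $g_1$ moves the image back into the correct position; the homogeneity of $\mathbb{Q}$ and of $\mathbb{Q}_2$ makes this a back-and-forth construction.

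\textbf{(iii)} Finally, define
\[
\mbox{act}_M(x, y, z) : \exists g_1 \exists g_2 \exists k \exists y' \exists z' \bigl( \mbox{inv}(g_1) \wedge \mbox{inv}(g_2) \wedge \Gamma^*(k) \wedge x = g_1 k g_2 \wedge \mbox{act}(g_2, y, y') \wedge \mbox{act}_1(k, y', z') \wedge \mbox{act}(g_1, z', z) \bigr),
\]
where $\mbox{inv}$ is the first-order predicate cutting out $G$ and $\mbox{act}$ is the formula of Theorem~\ref{2.13}. The forward direction of Lemma~\ref{3.3} holds unconditionally, so $M \models \mbox{act}_M(f, g, h)$ immediately yields $f(q) = r$; the decomposition in~(ii), Theorem~\ref{2.13} applied to the invertible factors, and the converse direction of Lemma~\ref{3.3} applied to $k$ together furnish the witnesses in the other direction. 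Combined with the interpretation of $\mathbb{Q}$ already obtained, this gives the required interpretation of the action of $M$ on $\mathbb{Q}$ inside $(M, \circ)$.

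The main obstacle is step~(ii): producing the $g_1 \circ k \circ g_2$ decomposition and tracking which of the four subscripted $\Gamma$-families the middle factor belongs to, as a function of whether $\mbox{supp}(f)$ is bounded above, below, both, or neither. Step~(i) is largely inherited from \cite{Edith}, and step~(iii) is a routine chain of existentials once the preceding pieces are in place.
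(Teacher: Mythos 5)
There is a genuine gap at your step (ii): the decomposition $f = g_1 \circ k \circ g_2$ with $g_1, g_2 \in G$ and $k \in \Gamma \cup \Gamma^- \cup \Gamma^+ \cup \Gamma^\pm$ does not exist for general $f \in M$. If $f = g_1 k g_2$ with $g_1, g_2$ invertible, then ${\rm im}(f) = g_1({\rm im}(k))$, so an automorphism of $\mathbb Q$ must carry ${\mathbb Q} \setminus {\rm im}(k)$ onto ${\mathbb Q} \setminus {\rm im}(f)$; for $k$ in one of the $\Gamma$-families the former is a very particular configuration (a dense $\mathbb{Q}_2$-patterned family of open convex gaps with isolated image points between the red ones), whereas ${\rm im}(f)$ can be anything from all of $\mathbb Q$ (take $f$ to be the identity or any automorphism: then $k$ would have to be surjective, hence in none of the families) to a dense co-dense set none of whose points is isolated in ${\rm im}(f)$. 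For such $f$ your formula $\mbox{act}_M(f,g,h)$ is vacuously false and fails to define the action. Your step (i) is also unnecessary work: the forward direction of Lemma \ref{3.3} holds for arbitrary $f \in M$, so no definable predicate $\Gamma^*$ is needed for soundness, and the paper never defines one.

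The paper avoids the sandwich by a one-sided decomposition imported from \cite{Edith} (Lemma 2.4 there): for every $f \in M$ there is $f_2$ in the appropriate $\Gamma$-family such that $f_1 = f_2 f$ lies in that same family. The formula used is $\mbox{act}_2(x, y, z) : \exists x_1 \exists x_2 \exists t(\mbox{act}_1(x_1, y, t) \wedge \mbox{act}_1(x_2, z, t) \wedge x_1 = x_2 x)$. Soundness needs only the unconditional forward direction of Lemma \ref{3.3} together with injectivity of $f_2$: from $f_2 f(q) = f_1(q) = f_2(r)$ one cancels $f_2$ to get $f(q) = r$. Completeness is exactly the cited decomposition, with the choice among $\Gamma$, $\Gamma^-$, $\Gamma^+$, $\Gamma^\pm$ dictated by the boundedness of ${\rm im}(f)$. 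If you want to salvage your plan, replace the $g_1 k g_2$ sandwich by this left-multiplication trick; the rest of your step (iii) then collapses into the paper's $\mbox{act}_2$.
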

 
\begin{proof} Again note that since $G$ is equal to the set of invertible elements of $M$, it is a first order definable subset of $M$ in the monoid language, so we may refer to members of $G$ in any 
definitions. Thus by Theorem \ref{2.13} we may represent the points of $\mathbb Q$ using group elements satisfying the formula `rational'. To deduce how $M$ acts from Lemma \ref{3.3}, we need to following 
formula:

\vspace{.1in}

\noindent $\mbox{act}_2(x, y, z) : \exists x_1 \exists x_2 \exists t(\mbox{act}_1(x_1, y, t) \wedge \mbox{act}_1(x_2, z, t) \wedge x_1 = x_2x)$.

\vspace{.1in}

Firstly, if $M \models \mbox{act}_2(f,g,h)$, let $f_1, f_2$, and $k$ be witnesses for $x_1, x_2, t$ respectively. 
Thus $M \models \mbox{act}_1(f_1,g,k)$ and $M \models \mbox{act}_1(f_2,h,k)$. Let $q$, $r$ and $s$ be the rationals encoded by $g$, $h$ and $k$ respectively. By Lemma \ref{3.3}, $f_1(q) = s = f_2(r)$. Also, 
$f_1 = f_2f$. So $f_2f(q) = f_1(q) = f_2(r)$, and as $f_2$ is injective, $f(q) = r$.

Conversely, suppose that $f(q) = r$ and let $g$ and $h$ be conjugate elements satisfying `rational' having supports $(-\infty, q)$ and $(-\infty, r)$ respectively. If ${\rm im}(f)$ is coterminal, then by 
\cite{Edith}, Lemma 2.4, there is $f_2 \in \Gamma$ such that $f_1 = f_2f \in \Gamma$. Let $s = f_1(q)$ and let $k$ satisfy `rational', encode $s$, and be conjugate to $g$. Then 
$s= f_1(q) = f_2f(q) = f_2(r)$. By Lemma \ref{3.3}, $M \models {\rm act}_1(f_1, g, k)$ and $M \models {\rm act}_1(f_2, h, k)$, and so this shows that $M \models {\rm act}_2(f, g, h)$ as desired. If however 
${\rm im}(f)$ is bounded below but not above, then by \cite{Edith}, Lemma 2.4, there is $f_1 \in \Gamma^-$ such that $f_1f \in \Gamma^-$, and the same proof shows that $M \models {\rm act}_2(f, g, h)$, with 
similar modifications in the other cases, ${\rm im}(f)$ bounded above but not below, using $\Gamma^+$, or bounded above and below using $\Gamma^\pm$.          \end{proof}

We now show how to derive the precisely analogous result for the endomorphism monoid $E$ of $({\mathbb Q}, \le)$. We collect together some results we need from \cite{Edith} in the following lemma.

\begin{lemma}\label{3.5} (i) Each $f \in S$ has a right inverse, and any such right inverse lies in $M$.

(ii) For any $h \in E$ there are $f \in M$ and $g \in S$ such that $h = gf$. \end{lemma}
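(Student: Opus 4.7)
The plan for (i) is straightforward. If $f \in S$ is a surjective order-preserving map $\mathbb{Q} \to \mathbb{Q}$, then for every $q \in \mathbb{Q}$ the fibre $f^{-1}\{q\}$ is a nonempty convex subset of $\mathbb{Q}$ (convex since $f$ is order-preserving). Choose any representative $f'(q) \in f^{-1}\{q\}$ for each $q$. I would then verify that any such choice $f'$ is strictly order-preserving: if $q < q'$ but $f'(q) \ge f'(q')$ then order-preservation of $f$ would give $q = f(f'(q)) \ge f(f'(q')) = q'$, a contradiction. Hence $f'$ is an embedding, i.e.\ lies in $M$, and clearly $f \circ f' = \mathrm{id}_{\mathbb{Q}}$. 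Any right inverse of $f$ is of this form, so the second clause follows immediately.

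For (ii), the approach is to write $h$ as ``first an embedding $f$, then a surjective collapse $g$''. The idea is to pick $f \in M$ whose image is sparse enough that the gaps leave room to realise every rational in the image of $g$. Concretely, I would first choose $f \in M$ so that between any two points of $\operatorname{im}(f)$, and also outside $\operatorname{im}(f)$ when appropriate, there sits a copy of $\mathbb{Q}$; any $f \in \Gamma$ (or its variants $\Gamma^\pm$) from the preceding discussion will do. Then define $g$ on $\operatorname{im}(f)$ by $g(f(x)) = h(x)$; this is well-defined because $f$ is injective, and it is order-preserving because $f$ reflects the order while $h$ preserves it.

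To extend $g$ from $\operatorname{im}(f)$ to all of $\mathbb{Q}$ in $S$, I would work interval by interval. On each maximal convex component $J$ of $\mathbb{Q}\setminus \operatorname{im}(f)$ (isomorphic to $\mathbb{Q}$ by the choice of $f$), $g$ has prescribed sup/inf limits coming from the neighbouring values of $g$ on $\operatorname{im}(f)$, and I need $g\upharpoonright J$ to map $J$ order-preservingly onto a convex subset of $\mathbb{Q}$ chosen so that, together with the already defined values of $g$ on $\operatorname{im}(f)$, the total image is all of $\mathbb{Q}$. A routine back-and-forth between the copy of $\mathbb{Q}$ on $J$ and the rationals not yet covered in the appropriate interval provides such an extension.

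The main obstacle is the bookkeeping in this extension step: one must simultaneously (a) keep $g$ order-preserving across the boundaries of the components of $\mathbb{Q}\setminus \operatorname{im}(f)$, and (b) arrange that the union of all the pieces of the image is exactly $\mathbb{Q}$, including any rationals outside $h(\mathbb{Q})$. Both are handled by choosing $f$ from the appropriate family $\Gamma, \Gamma^-, \Gamma^+$ or $\Gamma^\pm$ (depending on whether $\operatorname{im}(h)$ is coterminal, bounded below only, above only, or both), so that the gaps of $\operatorname{im}(f)$ are plentiful enough and placed where they are needed; this parallels the case analysis already used in the proof of Theorem~\ref{3.4}.
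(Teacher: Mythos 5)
Part (i) of your argument is correct and complete: the fibres of a surjective order-preserving map are nonempty convex sets, any selector is forced to be strictly increasing by the computation you give, and every right inverse is such a selector. (The paper itself offers no proof of Lemma \ref{3.5}, simply quoting it from \cite{Edith}, so there is nothing to compare against there.)

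Part (ii) has a genuine gap, located exactly where you wave at a ``routine back-and-forth'': it is \emph{not} true that any $f \in \Gamma$ (or the appropriate variant) will do. The problem is not about pairs of points of ${\rm im}(f)$ but about \emph{cuts} of ${\rm im}(f)$. Take an irrational $\alpha$ and let $h(x) = x$ for $x < \alpha$ and $h(x) = x+1$ for $x > \alpha$; then every rational $c \in (\alpha, \alpha+1)$ is omitted by $h$, and the cut of the domain at which it is omitted is the irrational cut at $\alpha$. If $f \in \Gamma$ happens to satisfy $\sup f((-\infty,\alpha)\cap{\mathbb Q}) = \inf f((\alpha,\infty)\cap{\mathbb Q}) = \beta$ (an irrational), which membership in $\Gamma$ does not forbid, then any order-preserving $g$ with $gf = h$ and $g(y) = c$ forces $y > f(x)$ for all $x<\alpha$ and $y < f(x)$ for all $x>\alpha$, i.e.\ $\beta < y < \beta$ --- impossible, so no surjective $g$ exists for that $f$. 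What is actually needed is that for each of the countably many cuts of the domain at which $h$ skips a rational, the corresponding cut of ${\rm im}(f)$ is separated by a whole gap (a copy of $\mathbb Q$ missing from ${\rm im}(f)$); this requires constructing $f$ \emph{after} inspecting $h$, e.g.\ by a back-and-forth that sends each such cut to a blue interval. A cleaner route avoiding $\Gamma$ altogether is to adjoin the omitted rationals to the domain: put $N = {\mathbb Q} \sqcup ({\mathbb Q}\setminus h({\mathbb Q}))$, ordered by declaring $x < c$ iff $h(x) < c$, check that $N$ is a countable dense order without endpoints (hence $N \cong {\mathbb Q}$ via some $e$), and take $f = e\restriction_{\mathbb Q}$ and $g$ the map sending $e(x)$ to $h(x)$ and $e(c)$ to $c$; then $g$ is surjective by construction and $gf = h$. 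Either repair works, but as written your choice of $f$ does not.
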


\begin{theorem} \label{3.6} The ordered set $({\mathbb Q}, <)$ and the action of its monoid of endomorphisms $E$ on $\mathbb Q$ are first order interpretable in $(E, \circ)$.   \end{theorem}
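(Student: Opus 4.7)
The plan is to extend Theorem \ref{3.4} from $M$ to $E$ by exploiting the decomposition supplied by Lemma \ref{3.5}. Since $M$ is a definable subset of $E$, the formulae `rational' and `codesame' continue to interpret $\mathbb Q$, and the formula $\mbox{act}_2$ continues to capture the action of members of $M$. Moreover, $S$ is definable in $E$ as those $g$ for which $\exists g'(gg' = 1)$, and by Lemma \ref{3.5}(i) any such right inverse $g'$ automatically lies in $M$. So everything that appears in the formulae below is either quantified over $E$ or relativised to one of the definable subsets $M, G, S$.

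The key technical observation I need is the following: for $g \in S$ and rationals $q', r$, one has $g(q') = r$ if and only if some right inverse $g'' \in M$ of $g$ satisfies $g''(r) = q'$. The backward direction is immediate since $g g'' = 1$ forces $g(q') = g(g''(r)) = r$. For the forward direction, every fibre $g^{-1}(s)$ is a non-empty convex subset of $\mathbb Q$, and the fibres are strictly linearly ordered by $s$ because $g$ is order-preserving; consequently any choice function picking one representative per fibre is automatically an order-embedding $g'' \in M$ with $gg'' = 1$, and taking the representative of $g^{-1}(r)$ to be $q'$ yields $g''(r) = q'$. With this in hand, I would introduce
\[
\mbox{act}_3(x, y, z): \exists x_1 \exists x_2 \exists x_2' \exists u \bigl( x_1, x_2' \in M \wedge x_2 x_2' = 1 \wedge x = x_2 x_1 \wedge \mbox{rational}(u) \wedge \mbox{act}_2(x_1, y, u) \wedge \mbox{act}_2(x_2', z, u) \bigr),
\]
asserting: $x$ factors as $x_2 x_1$ with $x_1 \in M$ and $x_2 \in S$ (witnessed by $x_2' \in M$), and there is an intermediate rational $u$ such that $x_1$ sends the rational of $y$ to $u$ while $x_2'$ sends the rational of $z$ to $u$. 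By the key observation the second conjunct on $x_2'$ is equivalent to $x_2$ sending $u$ to the rational encoded by $z$, so on composing we obtain that $x$ sends the rational of $y$ to that of $z$.

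Both directions of the correctness of $\mbox{act}_3$ are then routine using Theorem \ref{3.4} and Lemma \ref{3.5}(ii): given $h(q) = r$, Lemma \ref{3.5}(ii) provides the factorisation $h = x_2 x_1$, and $q' := x_1(q)$ together with the right inverse of $x_2$ supplied by the key observation furnish all required witnesses. Linear betweenness on the interpreted copy of $\mathbb Q$ is then recovered just as after Theorem \ref{2.13}, replacing `act' by $\mbox{act}_3$. The main obstacle is the key observation above; although conceptually straightforward given the convex-fibre structure of surjective order-preserving endomorphisms of $\mathbb Q$, some care is needed to check that the chosen choice function really is an order-embedding of $\mathbb Q$ into itself (i.e.\ an element of $M$) and is a genuine monoid right inverse of $g$, and to confirm that $S$ is indeed captured by the existence of a right inverse in $E$.
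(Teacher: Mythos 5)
Your proposal is correct and follows essentially the same route as the paper: decompose an arbitrary endomorphism via Lemma \ref{3.5}(ii) into an embedding followed by a surjection, and capture the action of the surjective part through a right inverse lying in $M$ (chosen to hit the prescribed point of the relevant fibre), reducing everything to $\mbox{act}_2$. The only difference is packaging --- the paper splits this into two formulae, $\mbox{act}_3$ for $S$ and $\mbox{act}_4$ for $E$, whereas you inline them into one, and you spell out the fibre-convexity argument behind the choice of right inverse that the paper leaves implicit.
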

 
\begin{proof}  First we may represent the action of $S$ (the epimorphisms) thus:

\vspace{.1in}

\noindent $\mbox{act}_3(x, y, z) : \exists t(xt = 1 \wedge \mbox{act}_2(t, z, y))$.

\vspace{.1in}

We show that for any $f \in S$, and $g, h \in G$, $E \models \mbox{act}_3(f, g, h)$ if and only if $f(q) = r$ where $q$ and $r$ are the rationals encoded by $g$ and $h$ respectively. To see that this is 
correct, first suppose that $E \models \mbox{act}_3(f, g, h)$. Then there is $f'$ such that $ff' = 1$, and by Lemma \ref{3.5}(i), $f' \in M$. Since $\mbox{act}_2(f', h, g)$, from the proof of Theorem 
\ref{3.4} we deduce that $f'(r) = q$, and it follows that $f(q) = r$. Conversely, if $f(q) = r$, there is some right inverse $f'$ of $f$ such that $f'(r) = q$, and $f' \in M$. Therefore 
$M \models \mbox{act}_2(f', h, g)$, and so also $E \models \mbox{act}_3(f, g, h)$.     

Now we move on to $f \in E$, and we consider the following formula:

\vspace{.1in}

\noindent $\mbox{act}_4(x, y, z) : \exists x_1 \exists x_2 \exists t(x = x_1x_2 \wedge \mbox{act}_3(x_1, t, z) \wedge \mbox{act}_2(x_2, y, t))$.

\vspace{.1in}

\noindent We show that $E \models \mbox{act}_4(f, g, h)$ if and only if $g$ and $h$ encode rationals $q$ and $r$ respectively such that $f(q) = r$. 
 
First suppose that $E \models \mbox{act}_4(f, g, h)$, and let $f_1$, $f_2$, and $k$ be witnesses for $x_1$, $x_2$, and $t$ respectively. Thus $f = f_1f_2$ and $E \models \mbox{act}_3(f_1, k, h)$  and
$E \models \mbox{act}_2(f_2, g, k)$. In particular, $E \models \mbox{rational}(k)$, so $k$ encodes some rational number $s$ say. Thus $f_1(s) = r$ and $f_2(q) = s$. Therefore 
$f(q) = f_1f_2(q) = f_1(s) = r$. Conversely, suppose that $f(q) = r$, and that $g$ and $h$ are members of $G$ satisfying `rational' encoding $q$ and $r$ respectively. By Lemma \ref{3.5}(ii) there are
$f_1 \in S$ and $f_2 \in M$ such that $f = f_1f_2$. Let $f_2(q) = s$, and let $k$ be a member of $G$ satisfying `rational' which encodes $s$. Then $f_1(s) = f_1f_2(q) = r$, and therefore
$E \models \mbox{act}_3(f_1, k, h)$ and $E \models \mbox{act}_2(f_2, g, k)$. It follows that $E \models \mbox{act}_4(f, g, h)$.         \end{proof}

\end{document}